\numberwithin{equation}{section}
\theoremstyle{plain}
\newtheorem{lemma}{Lemma}
\newtheorem{corollary}{Corollary}
\begin{document}
\title{Standard and Weierstrass spin groups on hyperelliptic Riemann surfaces}
\author{K.~M.Bugajska}
\address{Department of Mathematics and Statistics,
York Yniversity,
Toronto, ON, M3J 1P3,
Canada}
\begin{abstract}
We show that at any standard or Weierstrqass point $P$ on a hyperelliptic Riemann surface $\Sigma$ equipped with a nonsingular even spin structure, we may attach a spin group $\mathtt{G}_{P}$ in a natural way. All such spin groups are isomorphic to each other and to the alternating group $\mathtt{A}_{g}\triangleleft{\mathtt{S}_{g}}$. Moreover, for any two vertices of the same spin graph we construct a natural, unique  isomorphism between the corresponding, mutually conjugate, spin groups.  
\end{abstract}

\maketitle

\section{Introduction}
 It is well known ~\cite{RM95},~\cite{DV11},~\cite{RG67},  that when $\xi_{\epsilon}$ is an even nonsingular spin bundle on a Riemann surface $\Sigma$ then  each point $P\in{\Sigma}$ determines the unique section of the line bundle $\xi_{\epsilon}$ with a single, simple pole at $P$. The zero divisor $\mathcal{A}^{\epsilon}_{P}$ of this section is an integral divisor of degree $g$ whose index of specialty is zero. The points of $\Sigma$ which form the divisor $\mathcal{A}^{\epsilon}_{P}$ are exactly those points whose image under the Jacobi mapping $\Phi_{P}$ originating at $P$ lies in the theta divisor $\Theta_{\epsilon}$   ~\cite{FK92},~\cite{FK01},~\cite{RG67}.
 
It appears  that for any $P\in{\Sigma}$ the spin structure $\xi_{\epsilon}$ allows us to associate a set of points $\{\mathcal{S}^{\epsilon}_{P}\}\subset{\Sigma}$  (see ~\cite{KB13}). In other words, $\xi_{\epsilon}$ introduces a foliation of $\Sigma$ which we call  the $\epsilon$-foliation.  Each leaf  of this foliation carries an additional structure which determines an appropriate graph (called a spin-graph).   For any point $P$ of a leaf we
    will denote such graph by $\mathcal{S}^{\epsilon}_{P}$. Obviously, we have ${\mathcal{S}^{\epsilon}_{P}}={\mathcal{S}^{\epsilon}_{Q}}$ for any two points $P$ and $Q$ of te same leaf.  Besides,  each leaf corresponds to a well defined subset of $\Theta_{\epsilon}$ (more precisely, to the smallest subset of $\Theta_{\epsilon}$ containing the set $\Phi_{P}(\{\mathcal{A}^{\epsilon}_{P}\})$ for
      any point $P$ of the leaf, see definition $1$). 

When $\Sigma$ is a hyperelliptic Riemann surface with the genus $g$, then all leaves have finite number of points. Almost all leaves consist of $2g+2$ points and these points form vertices of a standard spin-graph of genus $g$. Such graph is totally symmetric with respect to all of its vertices.

A leaf of an $\epsilon$-foliation through a Weierstrass point has $g+1$ points. All of them are also Weierstrass points and they form vertices of (totally symmetric) Weierstrass spin-graph on $\Sigma$.

However, on any surface $\Sigma$ our $\epsilon$-foliation must have exceptional leaves. Their points will be called exceptional points and there are at most $4g$ of them. The notion of spin-graphs and the classification of hyperelliptic Riemann surfaces using exceptional spin graphs is given in [2].

In this paper we will demonstrate how a spin structure $\xi_{\epsilon}$ allows us to associate to any standard or to any  Weierstrass point $P$ a concrete group $\mathtt{G}_{P}$ which acts as a group of permutations on the set ${\{\mathcal{A}^{\epsilon}_{P}\}}\cong{\widehat{P}}=\Phi_{P}(\Sigma)\cap{\Theta_{\epsilon}}\subset{Jac\Sigma}$. We will show that any standard spin group (i.e.  associated to a standard point) and any Weierstrass spin group (i.e.  associated to a Weierstrass point) is isomorphic to the alternating group $\mathtt{A}_{g}\triangleleft{\mathtt{S}_{g}}$.  Moreover, we will  construct a concrete isomorphism $\mathcal{A}d(C):{\mathtt{G}_{P}}\rightarrow{\mathtt{G}_{Q}}$ between two spin groups only when $P$ and $Q$ are vertices of the same spin-graph $\mathcal{S}^{\epsilon}_{P}={\mathcal{S}^{\epsilon}_{Q}}$ (i.e.  when both points belong to exactly the same leaf of $\epsilon$-foliation on $\Sigma$).

 A case of a standard spin group on a surface of genus $3$ shall be investigate very thoroughly since this will provide tools to find the standard groups for the arbitrary genus $g>3$. Analogously with the Weierstrass spin groups.

The next section contais preliminaries. In section $3$ we consider a case of standard graphs on a surface of genus $3$.  We define isomorphisms between the sets $\widehat{P}$ and $\widehat{Q}$ when $PQ$ is an edge of this spin graph and show that for any face $F$ with this edge
the permutations representing this isomorphism have exactly the same parity. 

 This implies that  any spin chain $\mathsf{W}_{P}$ at $P$ produces an even permutation of the set $\widehat{P}\cong{\{\mathcal{A}^{\epsilon}_{P}\}}$. Since some chains produce generators of the alternating group we obtain that $\mathtt{G}_{P}\cong{\mathtt{A}_{3}}$.

In section $4$ we consider an arbitrary genus $g\geq{3}$. When the genus is $g>3$ then we use $3$-cells which were introduced in section $2$. Since any face $F\subset{\mathcal{S}^{\epsilon}_{P}}$ determines a unique $3$-cell in $\mathcal{S}^{\epsilon}_{P}$, it generates unique isomorphisms between the appropriate $g$ element sets associated to the vertices of $F$ resprctively. We will show that the parities of permutations representing these isomorphisms depend only on concrete vertices of an edge of $F$  and not on the  face $F$ itself.  As a consequence of this fact, any spin chain $\mathsf{W}_{P}$ at point $P$ produses an even permutation of the set $\widehat{P}\cong{\{\mathcal{A}^{\epsilon}_{P}\}}$.  Since we have spin chains that produce generators of the alternatig group we obtain immediately  that $\mathtt{G}_{P}\cong{\mathtt{A}_{g}}\triangleleft{\mathtt{S}_{g}}$.

In section $5$, for any two vertices $P$ and $Q$ of a standard spin graph of any genus,  we will construct the shortest path $C_{PQ}$ from $P$ to $Q$ in $\mathcal{S}^{\epsilon}_{P}={\mathcal{S}^{\epsilon}_{Q}}$.  This will allow us to find a concrete bijection between the set of spin chains at point $P$ and the set of chains at point $Q$.  Then we find a unique permutation $\sigma^{C}_{PQ}$ which represents an isomorphism from the set $\widehat{P}$ to $\widehat{Q}$. This will  produce an isomorphism $\mathtt{G}_{Q}={\mathcal{A}d(\sigma^{C}_{PQ})}{\mathtt{G}_{P}}$.

In section $6$ we show that the spin group at any Weierstrase point of $\Sigma$ is also  isomorphic to the alternating group $\mathtt{A}_{g}$.

Thus, at almost any point $P$ of a surface $\Sigma$ any even nonsingular spin bundle $\xi_{\epsilon}$ associates the spin group $\mathtt{G}_{P}$ which is isomorphic to the alternating group $\mathtt{A}_{g}$.
   There is only a finite, smaller than or equal to $4g$ number of points  on $\Sigma$ for which spin groups have to be different.  For these exceptional points even the spin groups at vertices of the same (exceptional) spin graph may be distinct.  All such exceptional spin groups as well as the classification of hyperelliptic Riemann surfaces based on exceptional groups is given in [3].

\section{Preliminaries}
Let $P$ be a point of a hyperelliptic Riemann surface $\Sigma$ equipped with a non singular even spin structure $\xi_{\epsilon}$. The divisor of the  unique section $\sigma_{P}$ of the  line bundle $\xi_{\epsilon}$  which has  a single simple pole at $P$  is $(\sigma_{P})=div\sigma_{P}=P^{-1}\mathcal{A}^{\epsilon}_{P}$, ~\cite{RG67}. Here $\mathcal{A}^{\epsilon}_{P}$  is an integral divisor of degree $g$ with vanishing index of specialty. Let $\Phi_{P}:\Sigma\rightarrow{Jac\Sigma}$ denote the Jacobi mapping originated at $P$.  We introduce te set $\widehat{P}_{\epsilon}$ as follows:
\begin{equation}
\widehat{P}_{\epsilon}:=\{\Phi_{P}(P');P'\in\{\mathcal{A}^{\epsilon}_{P}\}\}=\Phi_{P}(\Sigma)\cap{\Theta}_{\epsilon}
\end{equation} 
where $\Theta_{\epsilon}\subset{Jac\Sigma}$ is the appropriate theta-divisor.

Let $\mathcal{S}^{\epsilon}_{P}$ denote the spin graph through $P$ on $\Sigma$ (as introduced in ~\cite{KB13}) and let $\{\mathcal{S}^{\epsilon}_{P}\}$ denote the set of its vertices. The set of vertices of any spin graph may be equivalently given as follows
\newtheorem{definition}{Definition}
\begin{definition}
For any point $P\in\Sigma$ the set $\{\mathcal{S}^{\epsilon}_{P}\}\subset{\Sigma}$ is the smallest set of points containing $P$ with the property that for each $P'\in\{\mathcal{S}^{\epsilon}_{P}\}$ all points whose image under $\Phi_{P'}$ lie in the theta divisor $\Theta_{\epsilon}$ are also in $\{\mathcal{S}^{\epsilon}_{P}\}$ 
\end{definition}
When $P$ is a standard point then the spin graph $\mathcal{S}^{\epsilon}_{P}$ through this point is called a standard graph. It  has $2g+2$ vertices which will be enumarated as $\{\mathcal{S}^{\epsilon}_{P}\}={\{P,P_{k},\widetilde{P},\widetilde{P}_{k}; k=1,\ldots{g}\}}$ and  no pair of conjugate vertices is connected by an edge. Moreover, for each vertex $Q\in{\{\mathcal{S}^{\epsilon}_{P}\}}$, the enumeration of the points $P_{k}$, $k=1,\ldots,g$ of $\mathcal{A}^{\epsilon}_{P}$  together with the equivalences of the divisors $(\sigma_{P})\cong{(\sigma_{P_{k}})}\cong(\sigma_{\widetilde{P_{k}}})\cong(\sigma_{\widetilde{P}})$ produces a natural ordering $Q_{k}$, $k=1,2,..,g$ of the points of the divisor $\mathcal{A}^{\epsilon}_{Q}$ (and hence of the set $\widehat{Q_{\epsilon}}\subset{\Theta_{\epsilon}}$). 

 The set of all edges in a standard graph $\mathcal{S}^{\epsilon}_{P}$ is in one-one correspondence with an appropriate subset of the theta divisor. More precisely, we have the following:
\begin{lemma}
For any standard point $P$ of a surface $\Sigma$ there is one-one correspondence between the set 
\begin{equation*}
\Upsilon^{\epsilon}_{P}:={\bigcup_{P'\in\{\mathcal{S}_{P}^{\epsilon}\}}{\widehat{P'}_{\epsilon}}\subset\Theta_{\epsilon}}
\end{equation*}
and the set of all edges of the spin graph $\mathcal{S}^{\epsilon}_{P}$
\end{lemma}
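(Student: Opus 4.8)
The plan is to realize both sides as images of the same incidence data and then to match cardinalities. First I would record the defining incidence of the standard graph: two vertices $Q,Q'$ are joined by an edge precisely when $Q'\in\{\mathcal{A}^{\epsilon}_{Q}\}$, equivalently (by the remark recalled in the Introduction) when $\Phi_{Q}(Q')\in\Theta_{\epsilon}$. Since $\{\mathcal{A}^{\epsilon}_{Q}\}$ has $g$ points and each is again a vertex, the graph is $g$-regular on its $2g+2$ vertices, so it carries exactly $g(g+1)$ edges. I would then check that this relation is symmetric and invariant under the hyperelliptic conjugation $X\mapsto\widetilde{X}$, using two standing identities: the change-of-basepoint formula $\Phi_{Q}(X)=\Phi_{Q'}(X)-\Phi_{Q'}(Q)$, which gives $\Phi_{Q'}(Q)=-\Phi_{Q}(Q')$, and the fact that the involution acts as $-1$ on $Jac\,\Sigma$, which yields the crucial identity $\Phi_{Q}(Q')=\Phi_{\widetilde{Q'}}(\widetilde{Q})$. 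Because $\xi_{\epsilon}$ is even, $\Theta_{\epsilon}$ is symmetric under $z\mapsto -z$, so $\Phi_{Q}(Q')\in\Theta_{\epsilon}$ forces $\Phi_{Q'}(Q)=-\Phi_{Q}(Q')\in\Theta_{\epsilon}$ as well; this is exactly the symmetry of the edge relation.

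Next I would set up the natural map from edges to the theta divisor. To an oriented edge $Q\to Q'$ I attach the point $\Phi_{Q}(Q')\in\widehat{Q}_{\epsilon}\subset\Upsilon^{\epsilon}_{P}$, and the content lies in counting its fibres. I claim the assignment (oriented edge) $\mapsto$ (point) is exactly two-to-one, the fibre over $v=\Phi_{Q}(Q')$ being $\{\,Q\to Q',\ \widetilde{Q'}\to\widetilde{Q}\,\}$. Existence of the second member is the identity above; that there is no third is a Riemann--Roch argument: $\Phi_{Q}(Q')=\Phi_{R}(R')$ means $Q'+R\sim R'+Q$, and the only effective degree-two linear equivalence besides equality is the hyperelliptic pencil $g^{1}_{2}=X+\widetilde{X}$, forcing $(R,R')=(\widetilde{Q'},\widetilde{Q})$. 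Since there are $2g(g+1)$ oriented edges, this gives $|\Upsilon^{\epsilon}_{P}|=g(g+1)$, matching the edge count. To promote this to an honest bijection between unoriented edges and points I would fix any conjugation-equivariant orientation---orient one edge in each conjugate pair $\{e,\widetilde{e}\}$ arbitrarily and orient $\widetilde{e}$ by conjugation---under which $e\mapsto\Phi_{Q}(Q')$ and $\widetilde{e}\mapsto -\Phi_{Q}(Q')$ land on distinct points; the resulting map from the $g(g+1)$ edges onto the $g(g+1)$ points of $\Upsilon^{\epsilon}_{P}$ is then injective, hence bijective.

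I would also check two auxiliary facts to make the count rigorous. Within a single vertex the $g$ points of $\widehat{Q}_{\epsilon}$ are distinct: here I use that $\mathcal{A}^{\epsilon}_{Q}$ has vanishing index of specialty, so its $g$ points are distinct and $\Phi_{Q}$ is injective on $\Sigma$, and there is no collapsing inside one $\widehat{Q}_{\epsilon}$. The hypothesis that $P$ is a standard (hence non-Weierstrass) point guarantees $\widetilde{Q}\neq Q$ for every vertex, so conjugation is fixed-point-free on vertices and on edges, and no $v=\Phi_{Q}(Q')$ is a two-torsion value; this rules out a degenerate self-pairing $Q\to Q'=\widetilde{Q'}\to\widetilde{Q}$ (which would force the forbidden edge $\{Q,\widetilde{Q}\}$) and keeps every fibre genuinely of size two.

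The main obstacle, I expect, is exactly this fibre computation---proving that the overlaps among the sets $\widehat{Q}_{\epsilon}$ are precisely those dictated by conjugation and no more. Everything turns on the Riemann--Roch step excluding accidental degree-two equivalences, and it is here that nonsingularity of the even spin structure and the standard-point hypothesis are indispensable: at a Weierstrass point, or for a singular $\Theta_{\epsilon}$, extra coincidences through two-torsion values of $\Phi$ would alter the count, as the separate treatment of the Weierstrass graph already signals.
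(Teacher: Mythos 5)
Your argument is correct and follows essentially the same route as the paper: both sides are counted to be $g(g+1)$ (the edge count from $g$-regularity on $2g+2$ vertices, the count of $\Upsilon^{\epsilon}_{P}$ from the divisor equivalence $\Phi_{Q}(Q')=\Phi_{\widetilde{Q'}}(\widetilde{Q})$), and the bijection is then realized by fixing an orientation of each edge. The only difference is one of completeness: where the paper compresses the coincidence analysis into the phrase ``using the equivalence of appropriate divisors,'' you make explicit, via the uniqueness of the hyperelliptic $g^{1}_{2}$ and Riemann--Roch, that these conjugation-induced identifications are the \emph{only} overlaps among the sets $\widehat{Q}_{\epsilon}$, which is the step that actually justifies the cardinality $g(g+1)$.
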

\begin{proof}
Using the equivalence of appropriate divisors (i.e.the fact that $\frac{P_{i}}{P}\cong{\frac{\widetilde{P}}{\widetilde{P_{i}}}}$ etc.) we see that the cardinality of the set $\Upsilon^{\epsilon}_{P}$ given by $2\sum^{g}_{k=1}k$ is equal to    the number of edges of $\mathcal{S}^{\epsilon}_{P}$ i.e. to $\frac{1}{2}|\{\mathcal{S}^{\epsilon}_{P}\}|g$. The one-to-one correspondence is given by the following: an edge $P_{i}\widetilde{P_{k}}$ corresponds to $\Phi_{P_{i}}(\widetilde{P_{k}})$ when $i<k$ and to $\Phi_{\widetilde{P}_{k}}(P_{i})$ when $i>k$. An edge $PP_{i}$ corresponds to the point $\Phi_{P}(P_{i})$ in $\Theta_{\epsilon}$ whereas an edge $\widetilde{P}\widetilde{P_{i}}$ corresponds to $\Phi_{\widetilde{P}}(\widetilde{P_{i}})$; $i,k=1,\ldots,g$.
\end{proof}
When the genus $g>2$ than all faces of a standard spin graph are quadrangles and no two vertices of a face are mutually conjugate.  We have

\begin{lemma}\hfill
Let $\mathcal{F}$ denote the set of all distinct quadrangle faces of a standard graph $\mathcal{S}^{\epsilon}_{P}$ on a surface of genus $g>2$.  Let $\mathcal{F}_{P'}\subset{\mathcal{F}}$  denote the set of all faces with  a common vertex, say $P'$, of the graph $\mathcal{S}^{\epsilon}_{P}$. Then
\begin{enumerate}
\item For each vertex $P'$ the cardinality of the set $\mathcal{F}_{P'}$ is qual to 
\begin{equation*}
|\mathcal{F}_{P'}|={\frac{1}{2}g(g-1)(g-2)}
\end{equation*}
\item $|\mathcal{F}|={\frac{1}{4}(g+1)g(g-1)(g-2)}$
\end{enumerate}
\end{lemma}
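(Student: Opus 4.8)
The plan is to first turn the spin graph into a purely combinatorial object by reading off its adjacencies from the preceding lemma (the edge correspondence), and then to reduce both parts to a single count of quadrangles through a fixed vertex. From that correspondence the graph $\mathcal{S}^{\epsilon}_{P}$ is $g$-regular on the $2g+2$ vertices $\{P,P_{k},\widetilde{P},\widetilde{P_{k}}\}$: the vertex $P$ is adjacent exactly to $P_{1},\dots,P_{g}$, the vertex $\widetilde{P}$ exactly to $\widetilde{P_{1}},\dots,\widetilde{P_{g}}$, and $P_{i}$ is adjacent to $P$ and to $\widetilde{P_{k}}$ for every $k\neq i$ (and conjugately for $\widetilde{P_{i}}$). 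Since the graph is totally symmetric with respect to its vertices, it suffices to compute $|\mathcal{F}_{P'}|$ for the single vertex $P'=P$, the value for every other vertex following by transporting a face along a graph automorphism.

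For part (1) I would describe a quadrangle face through $P$ as a $4$-cycle $P\,A\,B\,C$ with $A,C$ distinct neighbours of $P$ and $B$ a common neighbour of $A$ and $C$ with $B\neq P$. Because the neighbours of $P$ are precisely $P_{1},\dots,P_{g}$, I may write $A=P_{i}$, $C=P_{j}$ with $i\neq j$. The key computation is to identify the common neighbours of $P_{i}$ and $P_{j}$: the neighbours of $P_{i}$ are $\{P\}\cup\{\widetilde{P_{m}}:m\neq i\}$ and of $P_{j}$ are $\{P\}\cup\{\widetilde{P_{m}}:m\neq j\}$, so their common neighbours other than $P$ are exactly the $\widetilde{P_{m}}$ with $m\notin\{i,j\}$, giving $g-2$ choices for the opposite vertex $B$. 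After checking that sending a face to the unordered pair $\{P_{i},P_{j}\}$ together with its opposite vertex is a bijection onto such data, one obtains
\begin{equation*}
|\mathcal{F}_{P}|=\binom{g}{2}(g-2)=\tfrac{1}{2}g(g-1)(g-2).
\end{equation*}
I would also remark that each such quadrangle $P\,P_{i}\,\widetilde{P_{m}}\,P_{j}$ contains no mutually conjugate pair (none of the conjugates $\widetilde{P},\widetilde{P_{i}},\widetilde{P_{j}},P_{m}$ occurs among its vertices), which is consistent with, and explains, the hypothesis recalled just before the lemma.

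For part (2) I would argue by double counting the incidences between vertices and faces. Each quadrangle face has exactly $4$ vertices, so counting the pairs $(P',F)$ with $P'\in F$ in two ways gives $4|\mathcal{F}|=\sum_{P'}|\mathcal{F}_{P'}|$; by part (1) and vertex-transitivity every summand equals $\tfrac{1}{2}g(g-1)(g-2)$ and there are $2g+2$ vertices, whence
\begin{equation*}
|\mathcal{F}|=\frac{2g+2}{4}\cdot\tfrac{1}{2}g(g-1)(g-2)=\tfrac{1}{4}(g+1)g(g-1)(g-2).
\end{equation*}

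I expect the only genuine obstacle to be the bookkeeping in the middle step: one must pin down the second neighbourhood of $P$ correctly from the edge description and verify that a quadrangle through $P$ is determined \emph{uniquely} by its pair of $P$-neighbours and its opposite vertex — that is, that two neighbours of $P$ never share a second common neighbour besides the prescribed $\widetilde{P_{m}}$, so that no face is counted twice and none is missed. Everything after that is the elementary arithmetic recorded above.
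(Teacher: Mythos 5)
Your proof is correct. For part (1) you follow essentially the same route as the paper: reduce to $P'=P$ by symmetry, observe that a face through $P$ has vertex set $\{P,P_{i},P_{j},\widetilde{P_{m}}\}$ with $i,j,m$ distinct, and count $\binom{g}{2}(g-2)$; your extra care in identifying the common neighbours of $P_{i}$ and $P_{j}$ and checking that the face is determined by the pair $\{P_i,P_j\}$ together with the opposite vertex is detail the paper leaves implicit, and it is welcome. For part (2), however, you take a genuinely different and cleaner route. The paper partitions $\mathcal{F}$ as $\mathcal{F}_{P}\cup\mathcal{F}_{\widetilde{P}}\cup\mathcal{F}_{1}\cup\cdots\cup\mathcal{F}_{g-1}$, where $\mathcal{F}_{k}$ consists of faces through $P_{k}$ avoiding the previously used vertices, and sums the pieces; as written that decomposition requires some care (for instance, a face such as $\{P_{2},P_{3},\widetilde{P_{1}},\widetilde{P_{4}}\}$ contains $\widetilde{P_{1}}$ but not $P_{1}$, so one must interpret the exclusions correctly for the union to exhaust $\mathcal{F}$ without overlap). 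Your double count of vertex--face incidences, $4|\mathcal{F}|=\sum_{P'}|\mathcal{F}_{P'}|=(2g+2)\cdot\tfrac{1}{2}g(g-1)(g-2)$, avoids all of that bookkeeping at the price of invoking vertex-transitivity for every vertex rather than just for $P$ --- but the paper already asserts that the standard graph is totally symmetric in its vertices (it is $K_{g+1,g+1}$ minus a perfect matching), and its own reduction in part (1) rests on the same fact, so nothing new is being assumed. Both arguments yield $|\mathcal{F}|=\tfrac{1}{4}(g+1)g(g-1)(g-2)$; yours is the more robust of the two.
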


\begin{proof}
For the first part it is enough to consider a case when $P'=P$. Now, each quadrangle face that belongs to $\mathcal{F}_{P}$ has vertises $\{P,P_{k},P_{l},\widetilde{P}_{m}\}$ with $k,l,m=1,2,...g$ all distinct from each other and  hence the property $(1)$ follows. To show the second property, let $\mathcal{F}_{1}$ denote the set of all faces with common vertex $P_{1}$ which are not in the set $\mathcal{F}_{P}\cap{\mathcal{F}_{\widetilde{P}}}$. Each face in $\mathcal{F}_{1}$ has vertices $\{P_{1},P_{k},\widetilde{P_{l}},\widetilde{P_{m}}\}$ with $k,l,m\geq{2}$ and all distinct. Let $\mathcal{F}_{2}$ be the set of  quadrangle faces with common vertex $P_{2}$ whose intersection with the set $\mathcal{F}_{P}\cap{\mathcal{F}_{\widetilde{P}}}\cap{\mathcal{F}_{1}}$ is empty and so on. Since $\mathcal{F}={\mathcal{F}_{P}\cup{\mathcal{F}_{\widetilde{P}}}\cup{}\mathcal{F}_{1}}\cup\ldots\cup\mathcal{F}_{g-1}$ the formula $(2)$ is true. 
\end{proof}
Although we will often denote a face $F\subset{\mathcal{S}^{\epsilon}_{P}}$ as  $F=QSRP$ , this notation does not indicate any concrete ordering of the vertices $Q$, $R$, $S$, $P$  which determine this face $F$.  On the contrary to this, when we  write a loop $\mathsf{L}=QSR..PQ$ in $\mathcal{S}_{P}$ at a vertex $Q$, or a path $C=QS..PR$  from a vertex $Q$ to $R$ in $\mathcal{S}_{P}$, then the order of writing of vertices does indicate the direction of traveling of a loop or of a path respectively. 

When the genus $g$ of a surface $\Sigma$ is $g>3$ then for any standard  spin graph $\mathcal{S}^{\epsilon}_{P}$ we may naturally define $n$-cells $\Gamma^{n}\subset{\mathcal{S}^{\epsilon}_{P}}$ as follows
\begin{definition}
For any $n=3,4,\ldots,g-1$ an $n$-cell of  $\mathcal{S}^{\epsilon}_{P}$ is obtained by deleting the vertices $\{R_{i_{1}},\ldots,R_{i_{g-n}},\widetilde{R_{i_{1}}},\ldots,\widetilde{R_{i_{g-n}}}\}$ from the set $\{\mathcal{S}^{\epsilon}_{P}\}$ of all vertices  and by deleting each edge of the graph which has $R_{i_{k}}$ or $\widetilde{R_{i_{k}}}$, (for $k=1,\ldots,g-n$) as its vertex.
\end{definition}
\newtheorem{remark}{Remark}
\begin{remark}
Any $n$-cell of a standard spin graph $\mathcal{S}^{\epsilon}_{P}$ belongs to the isomorphic class $\mathcal{S}(n)$ of standard spin graphs on a hyperelliptic Riemann surface of genus equal to $n$.
\end{remark}
When we rename the vertices $\{P,P_{k},\widetilde{P},\widetilde{P_{k}}; k=1,\ldots,g\}$ of a standard spin graph $\mathcal{S}^{\epsilon}_{P}$ as follows:  $R_{i}=P_{i}$ for $i=1,\ldots,g$ and  $R_{g+1}=P$, then we  immediately see that:
\begin{remark}
The number of distinct $n$-cells in a standard spin graph $\mathcal{S}^{\epsilon}_{P}$ is equal to the number of all combinations of $g+1$ objects taken $g-n$ at a time. In particular, there are $g+1$ distinct $(g-1)$-cells $\Gamma^{g-1}_{k}\subset{\mathcal{S}^{\epsilon}_{P}}$,  $k=1,2,...,g+1$, whose set of vertices is obtained from the set  $\{\mathcal{S}^{\epsilon}_{P}\}$  by deleting the vertices $R_{k}$ and $\widetilde{R}_{k}$ resprctively.
\end{remark}

We will use the following notation: If an object $\mathcal{O}$ is not a set then by $\{\mathcal{O}\}$   we will denote  the set of its points. We will often denote a face $F\subset{\mathcal{S}^{\epsilon}_{P}}$  determined by vertices $\{F\}={\{Q,S,R,P\}}$ as $F={QSRP}$ and this latter notation does not indicate any concrete ordering of the vertices of $F$.

\section{Standard Spin Group $\mathtt{G}(3)$}
Let $\Sigma$ be a surface of genus $g=3$ and let $P$ be a standard point of $\Sigma$ with respect to some nonsingular even (fixed) spin structure $\xi_{\epsilon}$   on this surface . Using our previous enumeration of the set of vertices of $\mathcal{S}^{\epsilon}_{P}$ we will introduce the following notations:
\begin{equation*}
\widehat{P_{\epsilon}}=\Phi_{P}(\Sigma)\cap{\Theta_{\epsilon}}\cong{\{\frac{P_{1}}{P},\frac{P_{2}}{P},\frac{P_{3}}{P}\}}\Leftrightarrow{\{1,2,3\}_{P}}
\end{equation*}
 Similarly, for any other vertex $Q$ of $\mathcal{S}^{\epsilon}_{P}$ we will identify the ordered  set $\widehat{Q_{\epsilon}}={\Phi_{Q}(\Sigma)}\cap{\Theta_{\epsilon}}$ with the set ${\{1,2,3\}}_{Q}$. We recall that any concrete enumeration of the points $P_{1},P_{2}$ and $P_{3}$ of the integral divisor $\mathcal{A}^{\epsilon}_{P}$ determines a unique enumeration of the poins of the divisor  $\mathcal{A}^{\epsilon}_{Q}$ (and hence the enumeration of the points of $\widehat{Q_{\epsilon}}$).  For example, for the vertex $P_{2}$ we have $\widehat{P_{2}}\cong\{\frac{P}{P_{2}},\frac{\widetilde{P_{1}}}{P_{2}},\frac{\widetilde{P_{3}}}{P_{2}}\}$ so we may naturally identify this set   with  $\{1,2,3\}_{P_{2}}$.
  
 The standard spin graph $\mathcal{S}^{\epsilon}_{P}\in{\mathcal{S}(3)}$ is given by the Pict1. To draw this graph we will use a convention that the indices of the points $P_{i}$, $i=1,2,3$ , are increasing from the bottom up. Of course,  our all  results do not depend neither from the enumeration of the points of $\mathcal{A}^{\epsilon}_{P}$ nor from the fact whether the indices  are increasing or decreasing from the bottom up.

 \begin{pspicture}(-3,-3)(4,3)
 \psline[showpoints=true]%
 (-1,-1)(1,-1)(2,0)
 \psline[showpoints=true]%
 (-1,1)(1,1)(2,2)
 \psline[showpoints=true, linestyle=dashed]%
 (0,2)(0,0)
 \psline(-1,-1)(-1,1)
 \psline(1,-1)(1,1)
 \psline(-1,1)(0,2)
 \psline(0,2)(2,2)
 \psline[linestyle=dashed]%
 (-1,-1)(0,0)
 \psline[linestyle=dashed]%
 (0,0)(2,0)
 \psline (2,0)(2,2)
 
 \rput(-0.2,-1.5){\rnode{A}{$\mathcal{S}(3)$}}
 \rput(0,-2.1){\rnode{B}{Pict.1}}
 
 \rput(-1.4,-0.8){\rnode{a}{$P^{\mathbf{R}}$}}
 \rput(-0.4,0.1){\rnode{b}{$P_{2}^{\mathbf{L}}$}}
 \rput(-1.4,1.2){\rnode{c}{$P_{3}^{\mathbf{R}}$}}
 \rput(0.4,2.4){\rnode{d}{${\widetilde{P_{1}}}^{\mathbf{L}}$}}
 \rput(2.4,2.3){\rnode{e}{${\widetilde{P}}^{\mathbf{L}}$}}
 \rput(0.8,1.3){\rnode{f}{${\widetilde{P_{2}}}^{\mathbf{R}}$}}
 \rput(1.5,-1){\rnode{g}{${P_{1}}^{\mathbf{R}}$}}
 \rput(2.5,0){\rnode{h}{${\widetilde{P_{3}}}^{\mathbf{L}}$}}
 \end{pspicture}

 Let $Q$ be any vertex of the graph $\mathcal{S}^{\epsilon}_{P}$ given by Pict1. Let $\widehat{Q}\cong{\{\frac{Q_{1}}{Q},\frac{Q_{2}}{Q},\frac{Q_{3}}{Q}\}}\Leftrightarrow{\{1,2,3,\}}_{Q}$ be the ordering of elements of $\widehat{Q}$ uniquely determined by an enumeration of elements of $\widehat{P}$. From now on, the index $\epsilon$, which indicates the characteristic $[\epsilon]$ of a fixed spin structure on $\Sigma$, will be omitted. Let $\{\overrightarrow{QQ_{1}},\overrightarrow{QQ_{2}},\overrightarrow{QQ_{3}}\}$ be a triple of vectors ordered by the enumeration of $\widehat{Q}$, i.e. a triple of vectors with a concrete orientation.  From the Pict1 we observe  that the triple $\{\overrightarrow{PP_{1}},\overrightarrow{PP_{2}},\overrightarrow{PP_{3}}\}$ has the right hand orientation whereas the triple $\{\overrightarrow{P_{2}P},\overrightarrow{P_{2}\widetilde{P_{1}}},\overrightarrow{P_{2}\widetilde{P_{3}}}\}$ associated to $\widehat{P_{2}}\cong{\{1,2,3\}}_{P_{2}}$ has the left hand orientation. 
 
 Now, the natural enumerations (determined by the enumeration of the elements of $\mathcal{A}_{P}$) of the sets $\widehat{P}$, $\widehat{P_{1}}$, $\widehat{\widetilde{P_{2}}}$ and $\widehat{P_{3}}$ produce the right hand oriented triples of vectors, whereas the natural ordering of the sets $\widehat{\widetilde{P}}$, $\widehat{\widetilde{P_{1}}}$, $\widehat{P_{2}}$ and $\widehat{\widetilde{P_{3}}}$  produce the left oriented triples of vectors. The right $\mathfrak{R}$ and the left $\mathfrak{L}$ hand of orientations of the appropriate triples of vectors associated to each vertex of $\mathcal{S}_{P}$ are indicated on the Pict1 respectively.
 
 Let $F$ be a (quadrilateral) face of the graph $\mathcal{S}_{P}\in{\mathcal{S}(3)}$ with vertices ,say, $\{F\}={\{Q_{1},Q_{2},Q_{3},Q_{4}\}}$. Any oriented edge $\overrightarrow{Q_{i}Q_{j}}$ of $F$ corresponds to the "`flip"' from the Jacobi mapping $\Phi_{Q_{i}}$ to $\Phi_{Q_{j}}$. We will call this a Jacobi rotation and we will denote this by $\Phi^{F}_{Q_{i}Q_{j}}$.

 When face $F$ and  its  edge $Q_{i}Q_{j}$ is fixed, see Pict.$2$, then $\Phi_{Q_{i}Q_{j}}$ determines unique isomorphism 
 \begin{equation*}
 \Phi^{F}_{Q_{i}Q_{j}}\equiv{\widehat{F}}:\widehat{Q_{i}}\cong{\{1,2,3\}}_{Q_{i}}\rightarrow{\widehat{Q_{j}}}\cong{\{1,2,3,\}}_{Q_{j}}
 \end{equation*}
 in a natural way. Thus, for $i=1$ and for $j=2$  we read from Pict2 that $\Phi^{F}_{Q_{1}Q_{2}}$ maps ${\frac{Q_{4}}{Q_{1}}}\rightarrow{\frac{Q_{1}}{Q_{2}}}$, ${\frac{Q_{2}}{Q_{1}}}\rightarrow{\frac{Q_{3}}{Q_{2}}}$ and the remaining point of $\widehat{Q_{1}}$ into the remaining point of the set $\widehat{Q_{2}}$.  Similarly for ${\Phi}^{F}_{Q_{2}Q_{1}}$.  In this way, we obtain unique bijections (denoted by $\widehat{F}$) between the sets 
 \begin{equation}
 \widehat{F}: \widehat{Q_{1}}\leftrightarrow{\widehat{Q_{2}}}\leftrightarrow{\widehat{Q_{3}}}\leftrightarrow{\widehat{Q_{4}}}
 \end{equation}
Let $\sigma^{F}_{Q_{i}Q_{j}}:\{1,2,3,\}_{Q_{i}}\rightarrow\{1,2,3\}_{Q_{j}}$ be the permutation
\begin{equation}
 \begin{pmatrix}1&2&3\\\sigma^{F}_{Q_{i}Q_{j}}(1)&\sigma^{F}_{Q_{i}Q_{j}}(2)&\sigma^{F}_{Q_{i}Q_{j}}(3)\end{pmatrix}
 \end{equation}
  corresponding to the isomorphism $\widehat{F}:\widehat{Q_{i}}\rightarrow{\widehat{Q_{j}}}$, ($i,j=1,2,3,4$, $i\neq{j}$). Obviously, the permutations $\sigma^{F}_{Q_{i}Q_{j}}$ and $\sigma^{F}_{Q_{j}Q_{i}}: \{1,2,3\}_{Q_{j}}\rightarrow{\{1,2,3\}_{Q_{i}}}$are inverse to each other. Moreover, for any $i,j,k\in{\{1,2,3,4\}}$ we have the property
\begin{equation}
\sigma^{F}_{Q_{i}Q_{j}}\circ{\sigma^{F}_{Q_{k}Q_{i}}}=\sigma^{F}_{Q_{k}Q_{j}}
\end{equation}

\begin{pspicture}(-2,-2)(4,2.5)
\pspolygon[showpoints=true]%
(0,0)(1,0)(2,1)(1,1)
\rput(0.7,1.2){\rnode{a}{$Q_{4}$}}
\rput(2.3,1.2){\rnode{b}{$Q_{3}$}}
\rput(1.4,0){\rnode{c}{$Q_{2}$}}
\rput(-0.3,0.2){\rnode{d}{$Q_{1}$}}

\rput(0.9,0.5){\rnode{A}{$F$}}
\rput(0.5,-1){\rnode{B}{Pict.2}}
\end{pspicture}

In particular, when we start with any vertex $Q_{i}$ of a face $F$ and move (in any direction) along the whole perimeter of $F$ then the composition of four permutations (corresponding to each oriented edge respectively) produces the identity $\mathbb{I}$. for example, let $F=PP_{1}\widetilde{P_{3}}P_{2}\subset{\mathcal{S}_{P}}$ (see Pict1).In this case we have:
\begin{equation*}
\sigma^{F}_{PP_{1}}=(132), \quad \sigma^{F}_{P_{1}\widetilde{P_{3}}}=(12), \quad \sigma^{F}_{\widetilde{P_{3}}P_{2}}=(123),\quad \sigma^{F}_{P_{2}P}=(23)
\end{equation*}
Hence $\sigma^{F}_{P_{2}P}\circ{\sigma^{F}_{\widetilde{P_{3}P_{2}}}}\circ{\sigma^{F}_{P_{1}\widetilde{P_{3}}}}\circ{\sigma^{F}_{PP_{1}}}={\mathbb{I}}={\sigma^{F}_{PP}}$ as expected.

We observe  that any edge of the standard spin graph $\mathcal{S}(3)$ is a common edge for exactly two distinct faces of the graph.

\begin{lemma}
Let $F_{1}$ and $F_{2}$ be two faces of the graph $\mathcal{S}^{\epsilon}_{P}\cong{\mathcal{S}(3)}$ with common edge $QR$,   ${F_{1}}\cap{F_{2}}=QR$.  The permutations $\sigma^{F_{1}}_{QR}:{\{1,2,3\}}_{Q}\rightarrow{\{1,2,3\}}_{R}$ and $\sigma^{F_{2}}_{QR}:{\{1,2,3\}_{Q}}\rightarrow{\{1,2,3\}_{R}}$ have exactly the same parity.
\end{lemma}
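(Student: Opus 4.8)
The plan is to show that the parity of $\sigma^{F}_{QR}$ depends only on the two endpoints $Q$ and $R$ of the edge, through their intrinsic handedness recorded in Pict.1, and not on the face $F$ at all. To each vertex $Q'$ of $\mathcal{S}(3)$ I would attach the sign $\eta(Q')=+1$ if $Q'$ carries the right-handed label $\mathfrak{R}$ and $\eta(Q')=-1$ if it carries the left-handed label $\mathfrak{L}$; thus $\eta(P)=\eta(P_{1})=\eta(\widetilde{P_{2}})=\eta(P_{3})=+1$ while $\eta(\widetilde{P})=\eta(\widetilde{P_{1}})=\eta(P_{2})=\eta(\widetilde{P_{3}})=-1$. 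The central claim I would establish is the identity
\begin{equation*}
\mathrm{sgn}\bigl(\sigma^{F}_{QR}\bigr)=\eta(Q)\,\eta(R),
\end{equation*}
whose right-hand side is manifestly independent of $F$. Granting this, if $F_{1}$ and $F_{2}$ are the two faces meeting along $QR$, then both $\sigma^{F_{1}}_{QR}$ and $\sigma^{F_{2}}_{QR}$ have parity $\eta(Q)\eta(R)$, which is precisely the assertion of the lemma.

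To prove the displayed identity I would argue via orientation. Each set $\widehat{Q'}$ carries a cyclic orientation: reading its three elements in the order $1,2,3$ of $\{1,2,3\}_{Q'}$ traverses them counterclockwise exactly when $\eta(Q')=+1$ and clockwise when $\eta(Q')=-1$. The isomorphism $\widehat{F}\colon\widehat{Q}\to\widehat{R}$ induced by the Jacobi rotation $\Phi^{F}_{QR}$ is a rotation, hence orientation-preserving: it carries the counterclockwise cyclic order on $\widehat{Q}$ to the counterclockwise cyclic order on $\widehat{R}$. Since the even elements of $\mathtt{S}_{3}$, namely the identity and the two $3$-cycles, are exactly those preserving a cyclic order on $\{1,2,3\}$, while the odd elements, the three transpositions, reverse it, comparing the cyclic orders at $Q$ and at $R$ forces $\sigma^{F}_{QR}$ to be even iff $\eta(Q)=\eta(R)$ and odd iff $\eta(Q)\neq\eta(R)$, which is the claimed formula. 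One checks this against the face $F=PP_{1}\widetilde{P_{3}}P_{2}$ computed above: the two edges joining equally-handed vertices, $PP_{1}$ and $\widetilde{P_{3}}P_{2}$, carry the even permutations $(132)$ and $(123)$, whereas the two edges joining oppositely-handed vertices, $P_{1}\widetilde{P_{3}}$ and $P_{2}P$, carry the odd permutations $(12)$ and $(23)$.

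The main obstacle is the justification that the induced map $\widehat{F}$ is orientation-preserving, since this is the only genuinely geometric step rather than bookkeeping; it is where the word \emph{rotation} must be converted into a statement about cyclic orders. I would handle it by unwinding the explicit correspondence rule defining $\widehat{F}$ on a face $Q_{1}Q_{2}Q_{3}Q_{4}$ (the assignment $\tfrac{Q_{4}}{Q_{1}}\mapsto\tfrac{Q_{1}}{Q_{2}}$, $\tfrac{Q_{2}}{Q_{1}}\mapsto\tfrac{Q_{3}}{Q_{2}}$, and remaining-to-remaining), translating each correspondence into its induced permutation and verifying that it always respects the cyclic orders fixed by the handedness labels. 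Should a conceptual proof of orientation-preservation prove awkward, a safe fallback is available because $\mathcal{S}(3)$ is small: by the face count established above it has only $\tfrac14\cdot4\cdot3\cdot2\cdot1=6$ faces and $12$ edges, with each edge lying in exactly two faces, so one can simply tabulate $\sigma^{F}_{QR}$ over all six faces and read off directly that the parity along each edge is well defined.
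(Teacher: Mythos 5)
Your proposal matches the paper's own argument: the paper likewise observes that the parity of $\sigma^{F}_{QR}$ is determined solely by whether the handedness labels $\mathfrak{R}/\mathfrak{L}$ attached to $Q$ and to $R$ agree, which is exactly your identity $\mathrm{sgn}(\sigma^{F}_{QR})=\eta(Q)\,\eta(R)$, and then verifies this on representative faces such as $F_{1}=PP_{1}\widetilde{P_{3}}P_{2}$. Your cyclic-order justification and the fallback tabulation over the six faces and twelve edges of $\mathcal{S}(3)$ are simply more explicit ways of carrying out the same finite verification the paper performs by example.
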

\begin{proof}\hfill
 The proof will be by construction. We observe that for any face $F$ with an edge $QR$ the parity of $\sigma^{F}_{QR}$  depends only on the orientations $\mathfrak{R}$ or $\mathfrak{L}$ of the triples of vectors associated to $\widehat{Q}$ and $\widehat{R}$ respectively and not on a face $F$ itself. More precisely we have the following:
\begin{itemize}
\item the parity of $\sigma^{F}_{QR}$ is even when the triples of vectors associated to $\widehat{Q}$ and $\widehat{R}$ respectively have the same orientations i.e. $\mathfrak{R}\mathfrak{R}$ or $\mathfrak{L}\mathfrak{L}$.
\item the parity of $\sigma^{F}_{QR}$ is odd when the corresponding orientations are opposite i.e. $\mathfrak{R}\mathfrak{L}$ or $\mathfrak{L}\mathfrak{R}$ respectively. 
\end{itemize}
Let us consider the following example: The triples of vectors $\{\overrightarrow{PP_{1}},\overrightarrow{PP_{2}},\overrightarrow{PP_{3}}\}$ and $\{\overrightarrow{P_{1}P},\overrightarrow{P_{1}\widetilde{P_{2}}},\overrightarrow{P_{}1}\widetilde{P_{3}}\}$ (corresponding to $\widehat{P}\cong{\{\frac{P_{1}}{P},\frac{P_{2}}{P},\frac{P_{3}}{P}\}}\Leftrightarrow{\{1,2,3,\}_{P}}$ and to $\widehat{P_{1}}\cong{\{\frac{P}{P_{1}},\frac{\widetilde{P_{2}}}{P_{1}},\frac{\widetilde{P_{3}}}{P_{1}}\}}\Leftrightarrow{\{1,2,3\}_{P_{1}}}$  respectively)  are right hand triples whereas the triple $\{\overrightarrow{\widetilde{P_{3}}\widetilde{P}},\overrightarrow{\widetilde{P_{3}}P_{1}},\overrightarrow{\widetilde{P_{3}}P_{2}}\}$ (associated to $\widehat{P_{3}}\cong{\{\frac{\widetilde{P}}{\widetilde{P_{3}}},\frac{P_{1}}{\widetilde{P_{3}}},\frac{P_{2}}{\widetilde{P_{3}}}\}}\Leftrightarrow{\{1,2,3\}_{\widetilde{P_{3}}}}$) is left handed. Let $F_{1}=PP_{1}\widetilde{P_{3}}P_{2}$ and let $F_{2}=PP_{1}\widetilde{P_{2}}P_{3}$ be two faces with common edge $PP_{1}$. The corresponding permutations are $\sigma^{F_{1}}_{PP_{1}}=(132)$ and $\sigma^{F_{2}}_{PP_{1}}=(123)$ and they both are even. On the other hand, the edge $P_{1}\widetilde{P_{3}}$ is common for the face $F_{1}$ and $F_{3}=P_{1}\widetilde{P_{3}}\widetilde{P}\widetilde{P_{2}}$. Now we have $\sigma^{F_{1}}_{P_{1}\widetilde{P_{3}}}=(12)$   and $\sigma^{F_{3}}_{P_{1}P_{3}}=(13)$ i.e. both permutations are odd as expected.
\end{proof} 
\begin{definition}
Any permutation $\sigma^{F}_{QR}: \{1,2,3\}_{Q}\rightarrow{\{1,2,3\}_{R}}$ will be called a spin permutation or a permutation allowed by the spin-graph $\mathcal{S}^{\epsilon}_{P}$.
\end{definition}
\begin{definition}
Let $Q$ be any vertex of $\mathcal{S}_{P}$. A spin chain $\mathsf{W}_{Q}$ at $Q$ consists of a loop ${\mathsf{L}_{Q}}\subset{\mathcal{S}_{P}}$ at $Q$  together with choices of concrete faces along each edge of this loop.
\end{definition}
We will write
\begin{equation*}
\mathsf{W}_{Q}=\{\mathsf{L}_{Q}=QQ_{1}Q_{2}\ldots{Q_{N}Q}; F_{1},F_{2},\ldots,F_{N},F_{N+1}\}
\end{equation*}
where $QQ_{1}\subset{F_{1}}$, $Q_{i-1}Q_{i}\subset{F_{i}}$ for $i=2,\ldots,N$ and $Q_{N}Q\subset{F_{N+1}}$.

This definition will be also valid for any standard spin graph on a Riemann surface with the genus $g\geq{3}$

\begin{lemma}
For any loop $\mathsf{L}_{Q}\subset{\mathcal{S}_{P}}$ at $Q$ and for any choice of faces $F_{1},\ldots,F_{N+1}$ along its edges the permutation
\begin{equation}
\sigma^{\mathsf{W}_{Q}}:=\sigma^{N+1}_{Q_{N}Q}\circ{\sigma^{N}_{Q_{N-1}Q_{N}}}\circ{\ldots}\circ{\sigma^{2}_{Q_{1}Q_{2}}}\circ{\sigma^{1}_{QQ_{1}}}
\end{equation}
which maps $\{1,2,3\}_{Q}\rightarrow{\{1,2,3\}_{Q}}$ is an even permutation.( Here $\sigma^{l}_{Q_{l-1}Q_{l}}$ denotes $\sigma^{F_{l}}_{Q_{l-1}Q_{l}}$, etc.)
\end{lemma}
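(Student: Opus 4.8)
The plan is to reduce the statement to a purely combinatorial parity count around the loop, using the preceding Lemma to control each individual edge. The starting point is that the sign of a composition of permutations is the sum modulo $2$ of the signs of its factors; writing $Q_{0}=Q_{N+1}=Q$ for the base vertex, this gives
\[
\operatorname{par}\bigl(\sigma^{\mathsf{W}_{Q}}\bigr)\equiv\sum_{l=1}^{N+1}\operatorname{par}\bigl(\sigma^{l}_{Q_{l-1}Q_{l}}\bigr)\pmod 2,
\]
so it suffices to show that this sum vanishes modulo $2$.

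First I would encode the orientation data attached to the vertices as a $\mathbb{Z}/2$-valued function: set $\pi(R)=0$ when the ordered triple of vectors associated to $\widehat{R}$ is right handed ($\mathfrak{R}$) and $\pi(R)=1$ when it is left handed ($\mathfrak{L}$). The decisive input is the preceding Lemma, which asserts that the parity of $\sigma^{F}_{QR}$ depends only on the orientations of $\widehat{Q}$ and $\widehat{R}$, and not on the face $F$: it is even exactly when the two orientations agree and odd exactly when they differ. In $\mathbb{Z}/2$ terms this says precisely $\operatorname{par}\bigl(\sigma^{F}_{QR}\bigr)\equiv\pi(Q)+\pi(R)$, independently of the face chosen along the edge $QR$. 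This is what makes the arbitrary face choices $F_{1},\ldots,F_{N+1}$ of the spin chain drop out of the calculation.

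Substituting this into the sum yields a doubling identity
\[
\sum_{l=1}^{N+1}\operatorname{par}\bigl(\sigma^{l}_{Q_{l-1}Q_{l}}\bigr)\equiv\sum_{l=1}^{N+1}\bigl(\pi(Q_{l-1})+\pi(Q_{l})\bigr)\pmod 2.
\]
On the right each intermediate position $Q_{j}$ with $1\le j\le N$ occurs exactly twice (once as the head of edge $j$ and once as the tail of edge $j+1$), while the base vertex occurs as $Q_{0}$ and again as $Q_{N+1}$; since $Q_{0}=Q_{N+1}=Q$ its total contribution is $2\pi(Q)$. Every term thus appears an even number of times, the whole sum is $\equiv0\pmod 2$, and $\sigma^{\mathsf{W}_{Q}}$ is even. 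I would emphasise that this count is insensitive to whether the loop revisits vertices, since it runs over positions in the closed walk rather than over distinct vertices.

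I expect no serious obstacle here once the preceding Lemma is available: the content is essentially that the edge parity is an additive (coboundary-type) function of a vertex orientation, and a closed walk integrates such a function to zero modulo $2$. The only point that genuinely needs care is checking that $\pi$ is well defined as a function of the vertex alone --- fixed once and for all by Pict.$1$ together with the global enumeration of $\widehat{P}$ --- so that the two appearances of each intermediate vertex really do cancel; granting this, closure of the loop forces evenness with no further input.
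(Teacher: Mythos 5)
Your proof is correct, and it rests on the same essential input as the paper's: the observation, established in the course of the proof of Lemma~3, that the parity of $\sigma^{F}_{QR}$ is determined solely by whether the orientations ($\mathfrak{R}$ or $\mathfrak{L}$) attached to $\widehat{Q}$ and $\widehat{R}$ agree, independently of the face $F$. Where you diverge is in the final combinatorial step. The paper argues via faces: it notes that for opposite edges $QR$ and $Q'R'$ of a quadrilateral face the parities of $\sigma^{F}_{QR}$ and $\sigma^{F}_{Q'R'}$ coincide, and from this asserts that the number of odd factors around any loop is even --- a step whose justification is left rather implicit. You instead encode the orientation as a $\mathbb{Z}/2$-valued function $\pi$ on vertices, observe that $\operatorname{par}(\sigma^{F}_{QR})\equiv\pi(Q)+\pi(R)$ is a coboundary, and telescope it around the closed walk so that every vertex contributes twice. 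This is cleaner, fully rigorous, and correctly insensitive to repeated vertices in the walk; it also makes transparent exactly why the arbitrary face choices $F_{1},\ldots,F_{N+1}$ are irrelevant. One small bookkeeping point: the formal \emph{statement} of Lemma~3 only says that two faces sharing an edge yield permutations of equal parity; the orientation characterization you invoke is the bulleted claim inside its \emph{proof}, so you should cite that explicitly (together with the fact that Pict.~1 fixes a well-defined orientation $\mathfrak{R}$ or $\mathfrak{L}$ at each vertex, which is what makes $\pi$ well defined). With that reference made precise, your argument stands on its own and is, if anything, a tightening of the paper's.
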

\begin{proof}
Let $F$ be any face of $\mathcal{S}^{\epsilon}_{P}\cong{\mathcal{S}(3)}$ with opposite edges, say, $QR$ and $Q'R'$. We notice (see Pict1) that the orientations of the triples of vectors associated to $\widehat{Q}$ and to $\widehat{R}$ are the same  (i.e. eiter $\mathfrak{R}\mathfrak{R}$ or $\mathfrak{L}\mathfrak{L}$), or they are different (i.e. either $\mathfrak{R}\mathfrak{L}$ or $\mathfrak{L}\mathfrak{R}$) if and only if the parities of the triples of vectors associated to $\widehat{Q'}$ and to $\widehat{R'}$ are the same or they are different respectively. This implies (see lemma 3) that 
\begin{equation*}
\text{parity}(\sigma^{F}_{QR})=\text{parity}(\sigma^{F}_{Q'R'}) 
\end{equation*}
Hence we obtain that for any oriented loop $\mathsf{L}_{Q}=QQ_{1}\ldots{Q_{N}Q}$ in $\mathcal{S}_{P}$ and for any choice of faces along its edges the number of odd permutations in $\sigma^{\mathsf{W}_{Q}}$ must be even. In other words, any spin-chain $\mathsf{W}_{Q}$ at a vertex $Q$ produces an even permutation of the set $\widehat{Q}$ (or, equivalently, of the set $\{\mathcal{A}_{Q}\}$).
\end{proof}
\begin{corollary}
For any vertex $Q$ of a standard spin graph $\mathcal{S}^{\epsilon}_{P}\cong{\mathcal{S}(3)}$ the set of all permutations of $\widehat{Q}$ produced by all choices of spin-chains at $Q$ forms a group $\mathtt{G}_{\epsilon}(Q)$. This group is isomorphic to the alternating group $\mathtt{A}_{3}\triangleleft{\mathtt{S}_{3}}$.
\end{corollary}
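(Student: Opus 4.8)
The plan is to establish three facts that together give the statement: that $\mathtt{G}_{\epsilon}(Q)$ is genuinely a group of permutations of $\widehat{Q}$, that it is contained in $\mathtt{A}_{3}$, and that it is nontrivial, whence it must coincide with $\mathtt{A}_{3}$. For the group structure I would argue in the style of a monodromy/holonomy construction. The identity is produced by the spin chain whose loop is the perimeter of a single face based at $Q$, since, as computed just before the corollary, traversing the whole perimeter of a face gives $\mathbb{I}=\sigma^{F}_{QQ}$. Closure comes from concatenation: given two chains at $Q$ over loops $\mathsf{L}_{Q}=QQ_{1}\ldots Q_{N}Q$ and $\mathsf{L}'_{Q}=QQ'_{1}\ldots Q'_{M}Q$, the juxtaposed loop $QQ_{1}\ldots Q_{N}QQ'_{1}\ldots Q'_{M}Q$ carrying the juxtaposed list of faces is again a spin chain at $Q$, and the composition property $\sigma^{F}_{Q_{i}Q_{j}}\circ\sigma^{F}_{Q_{k}Q_{i}}=\sigma^{F}_{Q_{k}Q_{j}}$ shows its permutation is $\sigma^{\mathsf{W}'_{Q}}\circ\sigma^{\mathsf{W}_{Q}}$. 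Inverses are produced by retracing the same loop, with the same face assigned to each edge, in the opposite direction, using that $\sigma^{F}_{QR}$ and $\sigma^{F}_{RQ}$ are mutually inverse. Thus $\mathtt{G}_{\epsilon}(Q)$ is a subgroup of the symmetric group on the three-element set $\widehat{Q}\cong\{1,2,3\}_{Q}$.

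Lemma 4 then gives the inclusion $\mathtt{G}_{\epsilon}(Q)\subseteq\mathtt{A}_{3}$ directly, since every spin chain at $Q$ yields an even permutation of $\widehat{Q}$. Because $\mathtt{A}_{3}$ is cyclic of prime order three, it has no proper nontrivial subgroup, so it now suffices to exhibit a single spin chain at $Q$ whose permutation is not the identity. The device is the back-and-forth loop: an edge $QR$ lies on exactly two faces $F_{1}$ and $F_{2}$, and by Lemma 3 the permutations $\sigma^{F_{1}}_{QR}$ and $\sigma^{F_{2}}_{QR}$ have the same parity but, crucially, need not be equal. I would take the loop $\mathsf{L}_{Q}=QRQ$, assign $F_{1}$ to the outgoing edge and $F_{2}$ to the returning edge, and read off the permutation $\sigma^{F_{2}}_{RQ}\circ\sigma^{F_{1}}_{QR}=(\sigma^{F_{2}}_{QR})^{-1}\circ\sigma^{F_{1}}_{QR}$. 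For $Q=P$ and the edge $PP_{1}$, the two faces $PP_{1}\widetilde{P_{3}}P_{2}$ and $PP_{1}\widetilde{P_{2}}P_{3}$ computed in the proof of Lemma 3 give $\sigma^{F_{1}}_{PP_{1}}=(132)$ and $\sigma^{F_{2}}_{PP_{1}}=(123)$, so the chain produces $(123)^{-1}\circ(132)=(123)$, a nontrivial three-cycle. By the total symmetry of the standard graph with respect to its vertices, the same construction applies verbatim at any vertex $Q$, producing a nontrivial element of $\mathtt{G}_{\epsilon}(Q)$.

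The main obstacle is not the algebra but the nontriviality in this last step: one must be sure the back-and-forth chain is not secretly trivial, i.e. that the two faces along the chosen edge really induce distinct permutations rather than merely permutations of equal parity. The explicit computation for the edge $PP_{1}$ settles this, and the parity bookkeeping of Lemma 3 guarantees the resulting element still lies in $\mathtt{A}_{3}$. Combining the three facts, namely subgroup, contained in $\mathtt{A}_{3}$, and nontrivial, forces $\mathtt{G}_{\epsilon}(Q)=\mathtt{A}_{3}$, which is the asserted isomorphism.
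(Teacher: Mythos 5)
Your proposal is correct and follows essentially the same route as the paper: the paper's own proof likewise combines Lemma 4 (every spin chain yields an even permutation) with the observation that the back-and-forth chain $QRQ$ over the two distinct faces along an edge produces a generator of $\mathtt{A}_{3}$. You simply spell out the group axioms and the explicit computation $(123)^{-1}\circ(132)=(123)$ in more detail than the paper does, which is a harmless elaboration rather than a different argument.
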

\begin{proof}
It is enough to notice that for any edge $QR\subset{\mathcal{S}_{P}}$ and for faces $F_{1}$ and $F_{2}$ with $F_{1}\cap{F_{2}}=Q_{1}Q_{2}$ the composition $\sigma^{F_{2}}_{RQ}\circ{\sigma^{F_{1}}_{QR}}$ forms a generator of the alternating group $\mathtt{A}_{3}$ acting on the set $\widehat{Q}$. Since for any chain $\mathsf{W}_{Q}$ the permutation $\sigma^{\mathsf{W}_{Q}}$ must be even the corollary follows.
\end{proof}

\section{Standard Spin Groups $\mathtt{G}(g)$; $g\geq{3}$}
Let $P$ be a standard point of a hyperelliptic Riemann surface $\Sigma$ with genus $g\geq{3}$. Suppose that the vertices of the spin graph $\mathcal{S}^{\epsilon}_{P}$ are  $\{\mathcal{S}_{P}\}=\{P,P_{k},\widetilde{P},\widetilde{P_{k}}; k=1,\ldots,g\}$.  Analogously to a case of genus $3$, we will draw this graph in such a way that te indices of vertices $P_{k}$ (connected with the point $P$) increase from the bottom up (and hence the indices of points $\widetilde{P_{k}}$ connected with the vertex $\widetilde{P}$ decrease from the bottom up).  Of course, our final results do not depend neither on an enumeration of vertices of $\mathcal{S}^{\epsilon}_{P}$  nor on our convention to draw the graph $\mathcal{S}^{\epsilon}_{P}$.  A standard spin graph on a surface of genus $g=4$ is given by Pict3.

\begin{pspicture}(-4.5,-5)(5,5)
\pspolygon[showpoints=true]%
(-2,-3.464)(2,-3.464)(4,0)(2,3.464)(-2,3.464)(-4,0)
\rput(0,-4.6){\rnode{A}{Pict.3}}
\rput(0,-4,2){\rnode{F}{$\mathcal{S}(4)$}}
\psline(-4,0)(-2,1.136)
\psline(-4,0)(-2,-1.164)
\psline(2,1.136)(4,0)
\psline(2,-1.164)(4,0)
\psline[showpoints=true]%
(-2,-1.164)(2,-1.164)
\psline[showpoints=true]%
(-2,1.136)(2,1.136)
\psline[linestyle=dashed]%
(-2,3.464)(2,1.136)
\psline(-2,-3.464)(2,-1.164)
\psline(-2,1.136)(2,3.464)
\psline[linestyle=dashed]%
(-2,3.464)(2,-1.164)
\psline[linestyle=dashed]%
(-2,1.136)(2,-3.464)
\psline[linestyle=dashed]%
(-2,-1.164)(2,3.464)
\psline[linestyle=dashed]%
(-2,-1.164)(2,-3.464)
\psline[linestyle=dashed]%
(-2,-3.464)(2,1.136)

\rput(-4.2,0.1){\rnode{a}{$P$}}
\rput(4.2,0.1){\rnode{b}{$\widetilde{P}$}}
\rput(-2.4,-3.3){\rnode{c}{$P_{1}$}}
\rput(2.4,-3.3){\rnode{d}{$\widetilde{P_{4}}$}}
\rput(-2,1.45){\rnode{e}{$P_{3}$}}
\rput(2,1.45){\rnode{i}{$\widetilde{P_{2}}$}}
\rput(-2,-0.8){\rnode{f}{$P_{2}$}}
\rput(2,-0.8){\rnode{j}{$\widetilde{P_{3}}$}}
\rput(-2.4,3.5){\rnode{g}{$P_{4}$}}
\rput(2.4,3.5){\rnode{h}{$\widetilde{P_{1}}$}}
\end{pspicture}

Similarly to a case of genus $g=3$, for each vertex $Q$ of $\mathcal{S}_{P}$ we introduce the set $\widehat{Q}$ as
\begin{equation*}
\widehat{Q}:=\Phi_{Q}(\Sigma)\cap{\Theta_{\epsilon}}\cong{\{\frac{Q_{1}}{Q},\frac{Q_{2}}{Q},\ldots,\frac{Q_{g}}{Q}\}}\Leftrightarrow{\{1,2,\ldots,g\}_{Q}}
\end{equation*}
where the enumeration of elements $Q_{i}$'s is uniquely determined by the fixed enumeration of points $P_{k}$ of te divisor $\mathcal{A}_{P}$.

Let $F$ be any (quadrangle) face of the graph $\mathcal{S}_{P}$. Suppose that its vertices are $\{F\}=\{P_{n_{1}},P_{n_{2}},\widetilde{P_{n_{3}}},\widetilde{P_{n_{4}}}\}$ with $n_{i}\in{\{1,\ldots,g\}}$ for $i=1,2,3,4$.  We see that any face $F$ determines a unique $3$-cell $\Gamma(F)$ whose vertices are $\{\Gamma(F)\}=\{P_{n_{i}},\widetilde{P_{n_{i}}} ; i=1,\ldots,4\}$. Now, the  unique bijections $\widehat{F}$ between the sets
\begin{equation}
\widehat{F}: \widehat{P_{n_{1}}}\leftrightarrow{\widehat{P_{n_{2}}}}\leftrightarrow{\widehat{\widetilde{P_{n_{3}}}}}\leftrightarrow{\widehat{\widetilde{P_{n_{4}}}}}
\end{equation}
may be defined in the following way: For each $k\notin{\{n_{1},n_{2},n_{3},n_{4}\}}$, $k\in{\{0,1,\ldots,g\}}$, with $P_{0}\equiv{P}$, we will identify the elements
\begin{equation}
\frac{\widetilde{P_{k}}}{P_{n_{1}}}\leftrightarrow \frac{\widetilde{P_{k}}}{P_{n_{2}}}\leftrightarrow \frac{P_{k}}{\widetilde{P_{n_{3}}}}\leftrightarrow \frac{P_{k}}{\widetilde{P_{n_{4}}}}
\end{equation}
 The correspondences between the remaining elements of $\widehat{P_{n_{i}}}$, $i=1,2$, and $\widehat{\widetilde{P_{n_{k}}}}$,  $k=3,4$, are uniquely  determined by the cell $\Gamma(F)\cong{\mathcal{S}(3)}$  and by its face $F$ in the way described in the previous section. (We proceed exactly in the same way  when a face $F$ has vertices $\{F\}=\{P,P_{n_{1}},\widetilde{P_{n_{2}}},P_{n_{3}}\}$.)

Hence, for any oriented edge, say for $P_{n_{i}}{\widetilde{P_{n_{k}}}}$ or for any path from $P_{n_{i}}$ to ${\widetilde{P_{n_{k}}}}$ along the perimeter of $F$ the bijections $\widehat{F}$ given by $(4.1)$ determine an isomorphism ${\widehat{F}}:{\widehat{P_{n_{i}}}}\rightarrow{\widehat{P_{n_{k}}}}$.   Since for any vertex $Q\in{\{\mathcal{S}_{P}\}}$)  we may  identify the set $\widehat{Q}$ with the set $\{1,2,\ldots,g\}_{Q}$, this isomorphism  may be represented by a permutation  $\sigma^{F}_{P_{n_{i}}\widetilde{P_{n_{k}}}}:\{1,2,\ldots,g\}_{P_{n_{i}}}\rightarrow{\{1,2,\ldots,g\}_{\widetilde{P_{n_{k}}}}}$.

   In this way  we obtain that  our face $F\subset{\mathcal{S}_{P}}$ determines  unique permutations
\begin{equation}
\sigma^{F}_{P_{n_{i}}\widetilde{P_{n_{k}}}}, \quad \sigma^{F}_{P_{n_{1}}P_{n_{2}}}, \quad \sigma^{F}_{\widetilde{P_{n_{3}}}\widetilde{P_{n_{4}}}}
\end{equation}
and their inverses that represent appropriate isomorphisms given by $(4.1)$. So, for example
\begin{equation}
\sigma^{F}_{P_{n_{i}}\widetilde{P_{n_{k}}}}=\begin{pmatrix}1&2&{\ldots}&g\\\sigma^{F}(1)&\sigma^{F}(2)&{\ldots}&\sigma^{F}(g)\end{pmatrix}
\end{equation}
where the bottom index $P_{n_{i}}\widetilde{P_{n_{k}}}$ on the right side is omitted.   Analogously for the remaining permutations given by $(4.3)$.

To find these permutations we will introduce some additional objects. Namely, since for any vertex $Q\in{\mathcal{S}_{P}}$ and for any face $F$ with a vertex $Q$ we have a unique $3$-cell $\Gamma={\Gamma(F)}$, we may define   the ordered set $\widehat{Q}^{\Gamma}$ as $\widehat{Q}^{\Gamma}:=\widehat{Q}\cap{\Phi}_{Q}(\Gamma(F))$. This set of three elements  will be identify with the set $\{\overline{1},\overline{2},\overline{3}\}^{\Gamma}_{Q}$. More precisely we have
\begin{equation*}
\{\overline{1},\overline{2},\overline{3}\}^{\Gamma}_{Q}\cong{\widehat{Q}^{\Gamma}}\cong{\{\frac{Q_{m_{1}}}{Q},\frac{Q_{m_{2}}}{Q},\frac{Q_{m_{3}}}{Q}\}}\cong{\{m_{1},m_{2},m_{3}\}}\subset{\{1,2,\ldots,g\}_{Q}}
\end{equation*}
where  $\{Q,Q_{m_{j}},\widetilde{Q},\widetilde{Q_{m_{j}}}; j=1,2,3\}\subset\{\mathcal{S}_{P}\}$ is the set of vertices of the $3$-cell $\Gamma(F)$ and $m_{1}<m_{2}<m_{3}$.

\begin{lemma}
Suppose that $Q_{1}$ and $Q_{2}$ are the vertices of an edge of a standard spin graph $\mathcal{S}_{P}$. For all faces $F\subset{\mathcal{S}_{P}}$ with an edge $Q_{1}Q_{2}$ the permutations  $\sigma^{F}_{Q_{1}Q_{2}}:\{1,2,\ldots,g\}_{Q_{1}}\rightarrow{\{1,2,\ldots,g\}_{Q_{2}}}$ have exactly the same parity.
\end{lemma}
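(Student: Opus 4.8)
The plan is to prove the stronger, face-independent statement announced at the start of this section: that $\mathrm{sgn}(\sigma^{F}_{Q_{1}Q_{2}})$ is determined by the unordered pair $Q_{1},Q_{2}$ alone. The starting point is the object-level description of $\widehat{F}$ furnished by $(4.1)$–$(4.2)$. Fix a face $F$ with edge $Q_{1}Q_{2}$ and let $\Gamma=\Gamma(F)\cong\mathcal{S}(3)$ be its $3$-cell, whose vertices use four of the classes $R_{1},\ldots,R_{g+1}$ (with $R_{g+1}=P$) introduced in Section $2$, two of them being the classes of $Q_{1}$ and $Q_{2}$. By $(4.2)$ the bijection $\widehat{F}$ sends, for every one of the $g-3$ spectator classes $R_{m}$ outside $\Gamma$, the element of $\widehat{Q_{1}}$ carried by $R_{m}$ to the element of $\widehat{Q_{2}}$ carried by the same $R_{m}$, while on the three cell-classes it acts exactly as the genus-$3$ spin permutation inside $\Gamma$. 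Thus, read through the class labels, $\sigma^{F}_{Q_{1}Q_{2}}$ is the composite of an edge-determined book-keeping map that fixes every class label (but shifts positions, since the enumerations of $\widehat{Q_{1}}$ and $\widehat{Q_{2}}$ omit different classes) with the genus-$3$ permutation supported on $\Gamma$. This reduction is uniform over the three edge types $P_{i}\widetilde{P_{k}}$, $PP_{i}$, $\widetilde{P}\widetilde{P_{i}}$.

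First I would treat the cell part. Since $\Gamma\cong\mathcal{S}(3)$, the restriction of $\sigma^{F}_{Q_{1}Q_{2}}$ to $\Gamma$ is a genuine genus-$3$ spin permutation, so by Lemma $3$ its parity is governed solely by the orientations $\mathfrak{R}/\mathfrak{L}$ of the triples attached to $Q_{1}$ and $Q_{2}$ within $\Gamma$: even when they agree, odd when they differ. To make this usable across different faces I would extend the orientation labelling $\mathfrak{R}/\mathfrak{L}$ of Pict.$1$ to a single global assignment $Q\mapsto\mathfrak{R}/\mathfrak{L}$ on all vertices of $\mathcal{S}_{P}$, and check that the orientation of every $3$-cell through a vertex $Q$ is precisely the one induced by this global label. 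Granting this compatibility, the cell part contributes a parity depending only on the global labels of $Q_{1}$ and $Q_{2}$, hence not on $F$.

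The main obstacle is the interaction between the cell part and the book-keeping part: replacing $F$ by another face $F'$ through the same edge replaces $\Gamma(F)$ by a different $3$-cell, so the division of the $g+1$ classes into ``cell'' and ``spectator'' moves, and neither part is separately $F$-independent — only their composite can be. To control this I would prove a spectator-stability lemma: passing from a sub-cell $\mathcal{C}$ to a sub-cell $\mathcal{C}^{+}$ with one additional class multiplies $\mathrm{sgn}(\sigma^{F}_{Q_{1}Q_{2}})$ by a factor that depends on that class and on the edge but not on $F$, because by $(4.2)$ the new element is inserted into $\widehat{Q_{1}}$ and $\widehat{Q_{2}}$ by the same global enumeration rule and maps class-to-class identically for every face through $Q_{1}Q_{2}$. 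Consequently the product $\mathrm{sgn}(\sigma^{F}_{Q_{1}Q_{2}})\cdot\mathrm{sgn}(\sigma^{F'}_{Q_{1}Q_{2}})$ is unchanged under enlarging the ambient cell, so the comparison of $F$ and $F'$ may be carried out inside the minimal sub-cell containing both faces. Since $F$ and $F'$ share the edge, that sub-cell spans at most $2+2+2=6$ classes, i.e. is a cell of genus at most $5$. Finally I would settle these base cases directly: genus $3$ is exactly Lemma $3$, while genera $4$ and $5$ are finite verifications of the same orientation bookkeeping (the genus-$4$ instance being computable from Pict.$3$). This yields $\mathrm{sgn}(\sigma^{F}_{Q_{1}Q_{2}})\cdot\mathrm{sgn}(\sigma^{F'}_{Q_{1}Q_{2}})=+1$, i.e. equal parities for all faces through the edge, completing the proof.
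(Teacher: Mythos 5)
Your route is genuinely different from the paper's. The paper proves the lemma by direct computation: for each edge type it writes out the explicit cycle structure of $\sigma^{F}_{Q_{1}Q_{2}}$ and reads off a signature formula depending only on the edge (e.g. $sgn(\sigma^{F}_{PP_{k}})=(-1)^{k-1}$ for edges $PP_{k}$, and $(-1)^{n_{j}-n_{i}-1}$ for edges $P_{n_{i}}\widetilde{P_{n_{j}}}$). You instead localize the comparison of two faces through a common edge to the smallest sub-cell containing both, reducing everything to genus at most $5$. The reduction is appealing, and your spectator-stability claim is in fact true, but the justification you give for it has a genuine gap, and the base cases that carry the remaining content are not actually done.

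The gap is in the spectator-stability lemma. You assert that adjoining one class $R_{m}$ to the ambient cell multiplies $\mathrm{sgn}(\sigma^{F}_{Q_{1}Q_{2}})$ by an $F$-independent factor ``because the new element is inserted by the same enumeration rule and maps class-to-class identically for every face.'' That reasoning only accounts for inversions between $R_{m}$ and the other spectator classes (which indeed never occur). The sign factor contributed by $R_{m}$ is the parity of the number of inversions $R_{m}$ forms with the three \emph{moving} cell classes, namely $\#\{c:\ m \text{ lies strictly between } c \text{ and } \tau_{F}(c)\}$, where $\tau_{F}$ is the class bijection that $F$ induces on the cell classes --- and $\tau_{F}$ visibly depends on $F$. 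The claim is rescued only by a telescoping identity you do not state: modulo $2$ this count equals $\sum_{c}\bigl([c<m]+[\tau_{F}(c)<m]\bigr)\equiv [q_{1}<m]+[q_{2}<m]\pmod{2}$, because the domain and codomain of $\tau_{F}$ differ exactly in the class $q_{1}$ of $Q_{1}$ versus the class $q_{2}$ of $Q_{2}$. Without this computation the central reduction step is unsupported. In addition, the genus-$4$ and especially the genus-$5$ base cases (needed when two faces through $Q_{1}Q_{2}$ share no vertex class beyond the edge itself) are deferred to ``finite verifications'' that are never performed, and the global $\mathfrak{R}/\mathfrak{L}$ compatibility invoked for the cell part is likewise asserted rather than checked; once the telescoping identity is supplied, these verifications are where all the remaining work lies.
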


\begin{proof}
Let $\Gamma=\Gamma(F)\cong{\mathcal{S}(3)}$ be the unique $3$-cell determined by a face $F$. It was shown explicitly  that when the genus of a surface is $g=3$ then this lemma is true. For an arbitrary genus $g$ we will show this by considering the following cases:

$\mathbf{CASE}$ $\mathbf{I}$.  Let us consider an edge $PP_{k}$, $k=1,2,\ldots,g$. Now, any face $F\subset{\mathcal{S}_{P}}$ with this edge determines a $3$-cell $\Gamma(F)$ whose set of   vertices is $\{\Gamma(F)\}={\{P,\widetilde{P},P_{n_{j}},\widetilde{P_{n_{j}}}}; j=1,2,3\}$ and $k\in{\{n_{1},n_{2},n_{3}\}}$.  Let us assume that,  according to our convention, we  have $1\leq{n_{1}}<n_{2}<n_{3}\leq{g}$ so that  the graph of $\Gamma(F)$ is given by Pict4.

\begin{pspicture}(-3,-3)(4,3)
\psline[showpoints=true]%
(-1,-1)(1,-1)(2,0)
\psline[showpoints=true]%
(-1,1)(1,1)(2,2)
\psline[showpoints=true, linestyle=dashed]%
(0,2)(0,0)
\psline(-1,-1)(-1,1)
\psline(1,-1)(1,1)
\psline(-1,1)(0,2)
\psline[linestyle=dashed]%
(-1,-1)(0,0)
\psline[linestyle=dashed]%
(0,0)(2,0)
\psline(0,2)(2,2)
\psline(2,0)(2,2)

\rput(0,-2.5){\rnode{A}{Pict.4}}
\rput(-0.5,-1.5){\rnode{B}{$\Gamma(F);$}}

\rput(-1.3,-1.1){\rnode{a}{$P$}}
\rput(1.4,-1.1){\rnode{b}{$P_{n_{1}}$}}
\rput(-0.4,0.1){\rnode{c}{$P_{n_{2}}$}}
\rput(-1.4,1.1){\rnode{d}{$P_{n_{3}}$}}
\rput(-0.4,2.1){\rnode{e}{${\widetilde{P_{n_{1}}}}$}}
\rput(0.9,1.4){\rnode{f}{$\widetilde{P_{n_{2}}}$}}
\rput(2.2,2.1){\rnode{g}{$\widetilde{P}$}}
\rput(2.3,0.2){\rnode{h}{$\widetilde{P_{n_{3}}}$}}

\rput(2.5,-2){\rnode{C}{$1\leq{n_{1}}<{n_{2}}<{n_{3}}\leq{g}$}}

\end{pspicture}

 Suppose that $k=n_{1}$. We have 
\begin{equation*}
\{\overline{1},\overline{2},\overline{3}\}^{\Gamma}_{P}\cong{\{\frac{P_{n_{1}}}{P},\frac{P_{n_{2}}}{P},\frac{P_{n_{3}}}{P}\}}\Leftrightarrow{\{n_{1},n_{2},n_{3}\}}\subset{\{1,2,\ldots,g\}_{P}}
\end{equation*}
and
\begin{equation*}
\{\overline{1},\overline{2},\overline{3}\}^{\Gamma}_{P_{n_{1}}}\cong{\{\frac{P}{P_{n_{1}}},\frac{\widetilde{P_{n_{2}}}}{P_{n_{1}}},\frac{\widetilde{P_{n_{3}}}}{P_{n_{1}}}\}}\Leftrightarrow{\{1,n_{2},n_{3}\}}\subset{\{1,2,\ldots,g\}_{P_{n_{1}}}} 
\end{equation*}
  Let $\mathfrak{F}$ be a face of the cell $\Gamma(F)=\Gamma(\mathfrak{F})$ with an edge $PP_{n_{1}}$. We  know from the previous section that the permutation
\begin{equation*}
\sigma^{\Gamma(\mathfrak{F})}_{PP_{n_{1}}}:\{\overline{1},\overline{2},\overline{3}\}^{\Gamma}_{P}\rightarrow{\{\overline{1},\overline{2},\overline{3}\}^{\Gamma}_{P_{n_{1}}}} 
\end{equation*}
must be even. For example, suppose that  the face $\mathfrak{F}$ has vertices $\{P,P_{n_{1}},\widetilde{P_{n_{3}}},P_{n_{2}}\}$).  In this case we have  
  $\sigma^{\Gamma(\mathfrak{F})}_{PP_{n_{1}}}=\begin{pmatrix}\overline{1}&\overline{2}&\overline{3}\\\overline{3}&\overline{1}&\overline{2}\end{pmatrix}\Leftrightarrow{\begin{pmatrix}n_{1}&n_{2}&n_{3}\\n_{3}&1&n_{2}\end{pmatrix}}$.
 
  Now, the permutation $\sigma^{\mathfrak{F}}_{PP_{n_{1}}}:\{1,2,\ldots,g\}_{P}\rightarrow{\{1,2,\ldots,g\}_{P_{n_{1}}}}$ is uniquely determined by $\sigma^{\Gamma(\mathfrak{F})}_{PP_{n_{1}}}$ together with the mappings
$\frac{P_{i}}{P}\rightarrow\frac{\widetilde{P_{i}}}{P_{n_{1}}}$ for $ i\notin{\{n_{1},n_{2},n_{3}\}}$,  $i\in{\{1,2,\ldots,g\}}$.   
 Hence, the isomorphism  ${\widehat{\mathfrak{F}}}:{\widehat{P}}\rightarrow{\widehat{P_{n_{1}}}}$ is represented by the permutation $\sigma^{\mathfrak{F}}_{PP_{n_{1}}}:{\{1,2,\ldots,g\}_{P}}\rightarrow{\{1,2,\ldots,g\}_{P_{n_{1}}}}$ which has the expilicit form as  $\sigma^{\mathfrak{F}}_{PP_{n_{1}}}=(12{\ldots}{n_{1}n_{3}n_{2}})$. The parity of this cycle is $(-1)^{n_{1}+1}=(-1)^{n_{1}-1}$.
 
 When $k=n_{2}$  then, for any face $\mathfrak{F}_{1}\subset{\Gamma(F)}=\Gamma(\mathfrak{F}_{1})$  with an edge $PP_{n_{2}}$, the permutation $\sigma^{\Gamma(\mathfrak{F}_{1})}_{PP_{n_{2}}}$ must be odd. Suppose that $\mathfrak{F}_{1}=PP_{n_{2}}\widetilde{P_{n_{1}}}P_{n_{3}}$.Since now we have
 \begin{equation*} \{\overline{1},\overline{2},\overline{3}\}^{\Gamma}_{P_{n_{2}}}\cong{\{\frac{P}{P_{n_{2}}},\frac{\widetilde{P_{n_{1}}}}{P_{n_{2}}},\frac{\widetilde{P_{n_{3}}}}{P_{n_{2}}}\}}\Leftrightarrow{\{1,n_{1}+1,n_{3}\}}\subset{\{1,2,\ldots,g\}_{P_{n_{2}}}}
 \end{equation*}
 we obtain that the appropriate permutation is
 \begin{equation*} \sigma^{\Gamma(\mathfrak{F}_{1})}_{PP_{n_{2}}}=\begin{pmatrix}\overline{1}&\overline{2}&\overline{3}\\\overline{3}&\overline{2}&\overline{1}\end{pmatrix}\Leftrightarrow{\begin{pmatrix}n_{1}&n_{2}&n_{3}\\n_{3}&n_{1}+1&1\end{pmatrix}}
 \end{equation*}
 . The corresponding isomorphism ${\widehat{\mathfrak{F}}_{1}}$ is now represented by a permutation $\sigma^{\mathfrak{F}_{1}}_{PP_{n_{2}}}:\{1,2,\ldots,g\}_{P}\rightarrow{\{1,2,\ldots,g\}_{P_{2}}}$  which can be written as the product of two cycles as follows: $\sigma^{\mathfrak{F}_{1}}_{PP_{n_{2}}}=(1{\ldots}(n_{1}-1)n_{1}n_{3})((n_{1}+1){\ldots}n_{2})$. We see that 
 \begin{equation}
 sgn(\sigma^{\mathfrak{F}_{1}}_{PP_{n_{2}}})=(-1)^{n_{1}+(n_{2}-n_{1}-1)}=(-1)^{n_{2}-1}
 \end{equation}
We may see that the other face of $\Gamma(F)$ with the edge $PP_{n_{2}}$ produces a permutation whose parity is also given by $(-1)^{n_{2}-1}$.
 
 Analogously, when $k=n_{3}$ any face $\mathfrak{F}_{2}\subset{\Gamma(F)}=\Gamma(\mathfrak{F}_{2})$ with an edge $PP_{n_{3}}$ determines a permutation $\sigma^{\Gamma(\mathfrak{F}_{2})}_{PP_{n_{3}}}:\{\overline{1},\overline{2},\overline{3}\}_{P}\rightarrow{\{\overline{1},\overline{2},\overline{3}\}_{P_{n_{3}}}}$ which must be even.  It is easy to see that  the corresponding isomorphism $\sigma^{\mathfrak{F}_{2}}_{PP_{n_{3}}}:\{1,2,\ldots,g\}_{P}\rightarrow{\{1,2,\ldots,g\}_{P_{n_{3}}}}$ is represented by a permutation which has the parity equal to $(-1)^{n_{3}-1}$.
 
 Summarizing, any face $F\subset{\mathcal{S}^{\epsilon}_{P}}$ with an edge $PP_{k}$ produces a unique isomorphism $\sigma^{F}_{PP_{k}}:\{1,2,\ldots,g\}_{P}\rightarrow{\{1,2,\ldots,g\}_{P_{k}}}$ which is represented by a permutation whose parity depends only on the index $k$, namely:  $sgn(\sigma^{F}_{PP_{k}})=(-1)^{k-1}$.

 $\mathbf{CASE}$ $II$. Let us consider an edge $P_{k}\widetilde{P_{l}}$, $k,l\in{\{1,2,\ldots,g\}}$, $k\neq{l}$.Any face with this edge determines the unique  $3$-cell $\Gamma\subset{\mathcal{S}_{P}}$ whose set of vertices   is equal to    $\{\Gamma\}=\{P_{n_{k}},\widetilde{P_{n_{k}}}; k=1,..,4\}$ and $k,l\in{\{n_{1},n_{2},n_{3},n_{4}\}}$.  Suppose that we have  $1\leq{n_{1}}<n_{2}<n_{3}<n_{4}\leq{g}$.  Now, according to our convention we will draw the graph of $\Gamma\cong{\mathcal{S}(3)}$ as on Pict5.

We see that for the vertex $P_{n_{1}}$  we have
\begin{equation*}
{\{\overline{1},\overline{2},\overline{3}\}^{\Gamma}_{P_{n_{1}}}}\cong{\{\frac{\widetilde{P_{n_{2}}}}{P_{n_{1}}},\frac{\widetilde{P_{n_{3}}}}{P_{n_{1}}},\frac{\widetilde{P_{n_{4}}}}{P_{n_{1}}}\}}\Leftrightarrow{\{n_{2},n_{3},n_{4}\}}\subset{\{1,2,\ldots,g\}_{P_{n_{1}}}}
\end{equation*}
and (for example) for the vertex $\widetilde{P_{n_{4}}}$ we have
\begin{equation*}
{\{\overline{1},\overline{2},\overline{3}\}^{\Gamma}_{\widetilde{P_{n_{4}}}}}\cong{\{\frac{P_{n_{1}}}{\widetilde{P_{n_{4}}}},\frac{P_{n_{2}}}{\widetilde{P_{n_{4}}}},\frac{P_{n_{3}}}{\widetilde{P_{n_{4}}}}\}}\Leftrightarrow{\{n_{1}+1,n_{2}+1,n_{3}+1\}}\subset{\{1,2,\ldots,g\}_{\widetilde{P_{n_{4}}}}}
\end{equation*}
From Pict5 and lemma3 we have that any face $\mathfrak{F}\subset{\Gamma}$ produces a permutation  $\sigma^{\Gamma(\mathfrak{F})}_{P_{n_{1}}{\widetilde{P_{n_{4}}}}}$ which must be even. For example, let $\mathfrak{F}=\{P_{n_{1}}\widetilde{P_{n_{4}}}P_{n_{2}}\widetilde{P_{n_{3}}}\}$. The isomorphism ${\widehat{\mathfrak{F}}}$ from ${\widehat{P_{n_{1}}}}^{\Gamma}$ to the set ${\widehat{\widetilde{P_{n_{4}}}}}^{\Gamma}$ corresponds to the permutation $\sigma^{\Gamma(\mathfrak{F})}_{P_{n_{1}}{\widetilde{P_{n_{4}}}}}=\begin{pmatrix}\overline{1},&\overline{2}&\overline{3}\\\overline{3}&\overline{1}&\overline{2}\end{pmatrix}\Leftrightarrow{\begin{pmatrix}n_{2}&n_{3}&n_{4}\\n_{3}+1&n_{1}+1&n_{2}+1\end{pmatrix}}$. This permutation together with the mappings
\begin{equation*}
\frac{\widetilde{P_{n_{i}}}}{P_{n_{1}}}\rightarrow{\frac{P_{i}}{\widetilde{P_{n_{4}}}}} \quad \text{for} \quad i\in{\{0,1,\ldots,g\}}, \quad  i\notin{\{n_{1},n_{2},n_{3},n_{4}\}},\quad P_{0}=P
\end{equation*} 
determines a unique isomorphism
\begin{equation*}
 \sigma^{\mathfrak{F}}_{P_{n_{1}}\widetilde{P_{n_{4}}}}:\{1,2,\ldots,g\}_{P_{n_{1}}}\rightarrow{\{1,2,\ldots,g\}_{\widetilde{P_{n_{4}}}}}
 \end{equation*}
 which correspons to the permutation
 \begin{equation*}
 \sigma^{\mathfrak{F}}_{P_{n_{1}}\widetilde{P_{n_{4}}}}=(n_{1}+1,n_{1}+2,\ldots,n_{2},n_{3}+1, n_{3}+2,\ldots,n_{4},n_{2}+1,n_{2}+2,\ldots,n_{3})
 \end{equation*}
 The signature of this permutation is 
 \begin{equation}
 sgn(\sigma^{\mathfrak{F}}_{P_{n_{1}}\widetilde{P_{n_{4}}}})=(-1)^{n_{4}-n_{1}-1}
 \end{equation}

 \begin{pspicture}(-3,-3)(4,3)
 \psline[showpoints=true]%
 (-1,-1)(1,-1)(2,0)
 \psline[showpoints=true]%
 (-1,1)(1,1)(2,2)
 \psline(-1,1)(0,2)
 \psline(-1,1)(1,1)
 \psline(-1,-1)(-1,1)
 \psline(0,2)(2,2)
 \psline(2,0)(2,2)
 \psline(1,-1)(1,1)
 \psline[showpoints=true, linestyle=dashed]%
 (0,2)(0,0)
 \psline[linestyle=dashed]%
 (-1,-1)(0,0)
 \psline[linestyle=dashed]%
 (0,0)(2,0)
\rput(-1.4,-1.1){\rnode{a}{$P_{n_{1}}$}} 
\rput(1.4,-1.1){\rnode{b}{$\widetilde{P_{n_{4}}}$}}
\rput(2.4,0.2){\rnode{c}{$P_{n_{2}}$}}
\rput(2.4,2.2){\rnode{d}{$\widetilde{P_{n_{1}}}$}}
\rput(-0.4,2.2){\rnode{e}{$P_{n_{4}}$}}
\rput(-1.4,1.1){\rnode{f}{$\widetilde{P_{n_{2}}}$}}
\rput(0.8,1.3){\rnode{g}{$P_{n_{3}}$}}
\rput(-0.4,0.2){\rnode{h}{$\widetilde{P_{n_{3}}}$}}

 \rput(0,-2){\rnode{A}{Pict.5}}
 \end{pspicture}

 The same face $\mathfrak{F}$ determines the isomorphism $\widehat{P_{n_{1}}}^{\Gamma}\rightarrow{\widehat{\widetilde{P_{n_{3}}}}^{\Gamma}}$ where 
 \begin{equation*} \widehat{\widetilde{P_{n_{3}}}}^{\Gamma}\cong{\{\overline{1},\overline{2},\overline{3}\}^{\Gamma}_{\widetilde{P_{n_{3}}}}}\cong{\{\frac{P_{n_{1}}}{\widetilde{P_{n_{3}}}},\frac{P_{n_{2}}}{\widetilde{P_{n_{3}}}},\frac{P_{n_{4}}}{\widetilde{P_{n_{3}}}}\}}\Leftrightarrow{\{n_{1}+1,n_{2}+1,n_{4}\}}\subset{\{1,2,\ldots,g\}_{\widetilde{P_{n_{3}}}}}
 \end{equation*}
 This mapping corresponds to an odd permutation 
 \begin{equation*} \sigma^{\Gamma(\mathfrak{F})}_{P_{n_{1}}\widetilde{P_{n_{3}}}}=\begin{pmatrix}\overline{1}&\overline{2}&\overline{3}\\\overline{3}&\overline{2}&\overline{1}\end{pmatrix}\Leftrightarrow{\begin{pmatrix}n_{2}&n_{3}&n_{4}\\n_{4}&n_{2}+1&n_{1}+1\end{pmatrix}}
 \end{equation*} 
  In exactly the same way as   before we obtain a unique    permutation
  \begin{equation*} \sigma^{\mathfrak{F}}_{P_{n_{1}}\widetilde{P_{n_{3}}}}:\{1,2,\ldots,g\}_{P_{n_{1}}}\rightarrow{\{1,2,\ldots,g\}_{\widetilde{P_{n_{3}}}}}
  \end{equation*}
   which, this time,  is equal to the product of two cycles
  \begin{equation*}
   {\sigma^{\mathfrak{F}}_{P_{n_{1}}\widetilde{P_{n_{3}}}}}=(n_{1}+1,n_{1}+2,..,n_{2},n_{4})(n_{2}+1,n_{2}+2,..,n_{3})
   \end{equation*}
 The signature of this permutation  is 
  \begin{equation}
  sgn(\sigma^{\mathfrak{F}}_{P_{n_{1}}\widetilde{P_{n_{3}}}})=(-1)^{n_{3}-n_{1}-1}
 \end{equation}
  The permutation produced by the other face  $\mathfrak{F_{1}}\subset{\Gamma}$ with the edge $P_{n_{1}}\widetilde{P_{n_{3}}}$ has exactly the same parity (given by $(4.7)$). 
  
  Similarly, we may check that the isomorphism $\widehat{P_{n_{1}}}^{\Gamma}\rightarrow{\widehat{\widetilde{P_{n_{2}}}}}^{\Gamma}$ produced by any  face of $\Gamma$ with an edge $P_{n_{1}}\widetilde{P_{n_{2}}}$ corresponds to an even permutation.  It induces  an isomorphism
  \begin{equation*}
   \{1,2,\ldots,g\}_{P_{n_{1}}}\rightarrow{\{1,2,\ldots,g\}_{\widetilde{P_{n_{2}}}}}
   \end{equation*}
    which is represented by a permutation whose parity is given by $(-1)^{n_{2}-n_{1}-1}$.
  
 We obtain analogous results for any edge $P_{l}\widetilde{P_{k}}\subset{\Gamma}\subset{\mathcal{S}_{P}}$ when the points $P$ and $\widetilde{P}$ are vertices of $\Gamma$ (i.e. when some $n_{j}={0}$ and $P_{0}=P$).
 
 Hence,  using the fact that the parities of permutation and its inverse coincide, we see that the signature of  the  permutation which represents an isomorphism $\sigma^{F}_{Q_{1}Q_{2}}:\{1,2,\ldots,g\}_{Q_{1}}\rightarrow{\{1,2,\ldots,g\}_{Q_{2}}}$    depends only on the vertices $Q_{1}$ and $Q_{2}$ and not on the choice of a face $F\subset{\mathcal{S}_{P}}$ along the edge $Q_{1}Q_{2}$.
\end{proof}

Let $\mathsf{W}_{Q}$ be any spin-chain at a vertex Q which was defined earlier by the definition 4.
\begin{lemma}
 Any spin-chain $\mathsf{W}_{P}=(\mathsf{L}_{P}=PQ_{1}{\ldots}Q_{N}P;F_{1},F_{2},\ldots,F_{N+1})$ determines a unique permutation $\sigma^{\mathsf{W}_{P}}$ of the set $\widehat{P}\cong{\mathcal{A}_{P}}$ which is an even permutation.
\end{lemma}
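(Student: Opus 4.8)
The plan is to reduce the evenness of $\sigma^{\mathsf{W}_P}$ to a purely combinatorial parity count on the loop $\mathsf{L}_P$, exactly as in the genus-$3$ argument of Lemma~4, but now driven by the explicit parities extracted in the proof of Lemma~5. Since Lemma~5 guarantees that the parity of $\sigma^{F}_{Q_1Q_2}$ depends only on the ordered pair of endpoints $Q_1,Q_2$ and not on the face $F$, I may write $p(Q_1Q_2)\in\{0,1\}$ for this well-defined edge parity and work with the parity $1$-cochain $p$ on the edge set of $\mathcal{S}_P$.

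First I would record the values of $p$ on the three types of edges of a standard spin graph, namely $PP_k$, $\widetilde{P}\widetilde{P_k}$ for $k=1,\ldots,g$, and the cross edges $P_i\widetilde{P_j}$ with $i,j\in\{1,\ldots,g\}$, $i\neq j$; by Lemma~1 there are no edges of the form $P_iP_j$, $\widetilde{P_i}\widetilde{P_j}$, $P\widetilde{P_k}$ or $\widetilde{P}P_k$, so these cases are exhaustive. From CASE~I of Lemma~5 we have $p(PP_k)\equiv k-1$, and from CASE~II we have $p(P_i\widetilde{P_j})\equiv|i-j|-1\equiv i+j-1\pmod 2$; the value $p(\widetilde{P}\widetilde{P_k})\equiv k-1$ then follows from the conjugation automorphism $Q\mapsto\widetilde{Q}$ of $\mathcal{S}_P$, which carries $PP_k$ to $\widetilde{P}\widetilde{P_k}$ and preserves parity, or equivalently by repeating CASE~I with $\widetilde{P}$ as the base point.

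The key step is to produce a vertex $2$-colouring realising $p$ as a coboundary, i.e. a function $\nu:\{\mathcal{S}_P\}\to\{0,1\}$ with $p(QR)\equiv\nu(Q)+\nu(R)\pmod 2$ for every edge $QR$. I would set $\nu(P)=0$, $\nu(\widetilde{P})=1$, $\nu(P_k)\equiv k-1$ and $\nu(\widetilde{P_k})\equiv k\pmod 2$, and verify the coboundary relation on each edge type using $|i-j|\equiv i+j\pmod 2$. This $\nu$ is the higher-genus incarnation of the right/left handedness $\mathfrak{R},\mathfrak{L}$ of Lemma~4: an edge carries an odd permutation precisely when its two endpoints receive opposite labels.

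Finally, for a loop $\mathsf{L}_P=PQ_1Q_2\cdots Q_NP$ with any faces $F_1,\ldots,F_{N+1}$, the sign of $\sigma^{\mathsf{W}_P}$ is $(-1)$ raised to the total parity $p(PQ_1)+p(Q_1Q_2)+\cdots+p(Q_NP)$. Substituting the coboundary relation makes this sum telescope: every vertex of the loop occurs as an endpoint of exactly two consecutive edges, so each $\nu$-value is counted twice and the total equals $2\bigl(\nu(P)+\nu(Q_1)+\cdots+\nu(Q_N)\bigr)\equiv 0\pmod 2$. Hence $\sigma^{\mathsf{W}_P}$ is even; and since each $\sigma^{F_i}_{Q_{i-1}Q_i}$ is the unique permutation attached to that edge-and-face in Section~4, the composite $\sigma^{\mathsf{W}_P}$ is indeed determined by the chain. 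The only genuine obstacle is the global existence and consistency of $\nu$, which once the edge parities of Lemma~5 are in hand is a short verification; I expect the delicate bookkeeping to be confirming that the CASE~II formula holds for every cross edge, including those lying only in $3$-cells containing $P$ and $\widetilde{P}$, so that $p$ is consistently defined on all edges before $\nu$ is introduced.
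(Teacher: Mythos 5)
Your proof is correct, and it rests on the same key input as the paper's --- the fact (Lemma 5) that the parity of $\sigma^{F}_{Q_{1}Q_{2}}$ depends only on the endpoints, with the explicit values $(-1)^{k-1}$ for $PP_{k}$ and $\widetilde{P}\widetilde{P_{k}}$ and $(-1)^{|i-j|-1}$ for $P_{i}\widetilde{P_{j}}$ --- but the way you finish is genuinely different and, to my eye, cleaner. The paper restricts to simple loops and splits into two cases according to whether $\widetilde{P}$ occurs on $\mathsf{L}_{P}$, then writes out the alternating sum of indices along the loop by hand and observes that it telescopes to an even number. You instead package the edge parity as a $\mathbb{Z}/2$-valued $1$-cochain $p$ and exhibit an explicit potential $\nu(P)=0$, $\nu(\widetilde{P})=1$, $\nu(P_{k})\equiv k-1$, $\nu(\widetilde{P_{k}})\equiv k$ with $p(QR)\equiv\nu(Q)+\nu(R)\pmod 2$ on every edge (the verification on the three edge types checks out, using $|i-j|\equiv i+j$). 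Once $p$ is a coboundary, the sum of $p$ over any closed walk is $2\sum\nu\equiv 0$, so evenness follows for arbitrary loops with no case analysis and no reduction to simple loops; your $\nu$ is also exactly the higher-genus analogue of the $\mathfrak{R}/\mathfrak{L}$ labelling that drives the genus-$3$ argument of Lemma 4, which the paper does not make explicit for $g>3$. The one point you rightly flag --- that the CASE II parity formula must hold for cross edges whose $3$-cells contain $P$ and $\widetilde{P}$ (some $n_{j}=0$) --- is asserted but not computed in the paper either, so your proof assumes no more than the paper's does. In short: same foundation, but your coboundary formulation buys uniformity over all closed walks and eliminates the paper's two-case bookkeeping.
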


\begin{proof}
 It is enough to consider only simple loops $\mathsf{L}_{P}$.

$\mathbf{CASE}$ $\mathbf{I}$: Assume that the conjugate $\widetilde{P}$ is not a vertex of $\mathsf{L}_{P}$.  In this case we must have 
\begin{equation*}
\mathsf{L}_{P}=PP_{n_{1}}\widetilde{P_{m_{1}}}{\ldots}P_{n_{k}}\widetilde{P_{m_{k}}}P_{n_{k+1}}P
\end{equation*}
with ${\widetilde{P_{m_{p}}}}\neq{\widetilde{P_{m_{q}}}}$ and with $P_{n_{p}}\neq{P_{n_{q}}}\neq{P_{n_{k+1}}}$ for $p,q\in{\{1,2,,,,k\}}$,     $p\neq{q}$. Hence, ${N+1}=2k+2$ is even. Using considerations similar to those in lemma $5$  it is easy to see that $sgn(\sigma^{\mathsf{W}_{P}})=(-1)^{2n_{k+1}}=+1$ so that $\sigma^{\mathsf{W}_{P}}$ is an even permutation.

$\mathbf{CASE}$  $\mathbf{II}$. Suppose that $\widetilde{P}$ is a vertex of a loop $\mathsf{L}_{P}$. This means that 
\begin{equation*}
\mathsf{L}_{P}=PP_{n_{1}}\widetilde{P_{m_{1}}}{\ldots}P_{n_{l}}\widetilde{P_{m_{l}}}\widetilde{P}\widetilde{P_{m_{l+1}}}P_{n_{l+1}}{\ldots}\widetilde{P_{m_{k}}}P_{n_{k}}P
\end{equation*}
and our chain is given by $\mathsf{W}_{P}=(\mathsf{L_{P}}; F_{1},\ldots,F_{N+1})$ with ${N+1}=2k+2$. Since the sum: 
$n_{1}+(m_{1}-n_{1})+(n_{2}-m_{1})\ldots+(n_{l}-m_{l-1})+(m_{l}-n_{l})+m_{l}+m_{l+1}+(n_{l+1}-m_{l+1})+(m_{l+2}-n_{l+1})+{\ldots}+(m_{k}-n_{k-1})+(n_{k}-m_{k})+n_{k}$ is equal to $2m_{l}+2n_{k}$ the parity of $\sigma^{\mathsf{W}_{P}}$ ( which is given  by  $sgn(\sigma^{\mathsf{W}_{P}}=(-1)^{2m_{l}+2n_{k}})$) is even. 
\end{proof}

\begin{lemma}
For any vertex $Q$ of a standard spin graph $\mathcal{S}_{P}$ on a surface $\Sigma$  of genus $g\geq{3}$, the set of all spin-chains at $Q$ produces a group $\mathtt{G}_{Q}$ of permutations of the set $\widehat{Q}\cong{\{\mathcal{A}_{Q}\}}$ which is isomorphic to the alternating group $\mathtt{A}_{g}\triangleleft{\mathtt{S}_{g}}$.
\end{lemma}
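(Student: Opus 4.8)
The plan is to prove the two inclusions $\mathtt{G}_{Q}\subseteq\mathtt{A}_{g}$ and $\mathtt{G}_{Q}\supseteq\mathtt{A}_{g}$ separately. That $\mathtt{G}_{Q}$ is a group at all requires only three remarks: the trivial loop gives the identity, concatenating two loops at $Q$ together with their face choices is again a spin-chain at $Q$ whose permutation is the composite, and reversing a spin-chain yields the inverse permutation. The inclusion $\mathtt{G}_{Q}\subseteq\mathtt{A}_{g}$ is then immediate from Lemma $6$ (valid at every vertex by the total symmetry of the standard graph), which says that $\sigma^{\mathsf{W}_{Q}}$ is even for every spin-chain $\mathsf{W}_{Q}$ at $Q$. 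The whole weight of the argument thus falls on the reverse inclusion, for which I would exhibit inside $\mathtt{G}_{Q}$ a generating set of $\mathtt{A}_{g}$.

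That generating set will consist of one $3$-cycle on each $3$-element subset of $\{1,2,\ldots,g\}_{Q}$, produced by the simplest possible spin-chains. Fix a $3$-cell $\Gamma\subseteq\mathcal{S}_{P}$ through $Q$ (so $\Gamma\cong\mathcal{S}(3)$ by Remark $1$), choose an edge $QR$ of $\Gamma$, and let $F_{1},F_{2}$ be the two faces of $\Gamma$ sharing this edge (there are exactly two). I would consider the back-and-forth chain $\mathsf{W}_{Q}=(QRQ;F_{1},F_{2})$, whose permutation is $\sigma^{F_{2}}_{RQ}\circ\sigma^{F_{1}}_{QR}$, and claim that it acts as a $3$-cycle on the three indices labelling $\widehat{Q}^{\Gamma}\subset\widehat{Q}$ and fixes every remaining index of $\{1,2,\ldots,g\}_{Q}$.

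The verification of this claim is the technical heart of the proof, and the step I expect to require the most care. By the construction of Section $4$ (formulas $(4.1)$ and $(4.2)$), each $\sigma^{F_{i}}_{QR}$ is block-diagonal with respect to the splitting of $\widehat{Q}$ into the three active elements of $\widehat{Q}^{\Gamma}$ and the $g-3$ inactive elements (those of the form $\frac{\widetilde{P_{k}}}{Q}$ or $\frac{P_{k}}{Q}$ with $k$ outside the index-pairs of $\Gamma$): active goes to active by the genus-$3$ rule, inactive goes to inactive by the identifications $(4.2)$. On the active block, $\sigma^{F_{2}}_{RQ}\circ\sigma^{F_{1}}_{QR}$ is precisely the composition analysed in Corollary $1$ for $\Gamma\cong\mathcal{S}(3)$, hence a generator of $\mathtt{A}_{3}$, i.e.\ a genuine $3$-cycle. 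On the inactive block the key observation is \emph{face-independence}: the identification of $\frac{\widetilde{P_{k}}}{Q}$ with its counterpart at $R$ depends only on the ordered pair $(Q,R)$ and on $k$, not on the chosen face, and it is transitive along a path. Therefore the inactive part of $\sigma^{F_{2}}_{RQ}$ is exactly the inverse of the inactive part of $\sigma^{F_{1}}_{QR}$, and the closed round trip is the identity on all inactive indices. (A single $\sigma^{F_{i}}_{QR}$ does move inactive indices, by the re-indexing shift visible in the explicit cycles of Lemma $5$; it is only because the loop returns to $Q$ that this shift cancels.)

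It remains to see that these $3$-cycles exhaust a generating set of $\mathtt{A}_{g}$. The $g$ elements of $\widehat{Q}$ are in natural bijection with the $g$ index-pairs other than that of $Q$, and by the combinatorial description of $3$-cells in Remark $2$ a $3$-cell through $Q$ is obtained precisely by selecting three of these pairs to remain active; hence as $\Gamma$ varies the active triple $\widehat{Q}^{\Gamma}$ realises every $3$-element subset of $\{1,2,\ldots,g\}_{Q}$. For each such subset the construction above places a $3$-cycle, and therefore via its square the opposite $3$-cycle as well, into $\mathtt{G}_{Q}$. Since the $3$-cycles on all $3$-element subsets generate $\mathtt{A}_{g}$, we obtain $\mathtt{G}_{Q}\supseteq\mathtt{A}_{g}$, and together with Lemma $6$ this gives $\mathtt{G}_{Q}\cong\mathtt{A}_{g}\triangleleft\mathtt{S}_{g}$, as claimed.
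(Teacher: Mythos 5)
Your proposal is correct and follows essentially the same route as the paper: the containment $\mathtt{G}_{Q}\subseteq\mathtt{A}_{g}$ comes from Lemma 6, and the reverse inclusion comes from the back-and-forth chains $QSQ$ with two distinct faces, which the paper simply asserts ``produce the generators of the alternating group.'' Your additional step --- confining $F_{1},F_{2}$ to a common $3$-cell $\Gamma$ so that the round trip is a $3$-cycle on $\widehat{Q}^{\Gamma}$ and the identity on the $g-3$ inactive indices, and then letting $\Gamma$ range over all $3$-cells through $Q$ to realise every $3$-element subset --- is precisely the verification the paper leaves implicit, and it is carried out correctly.
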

\begin{proof}
It is obvious that the set $\{\sigma^{\mathsf{W}_{Q}}\}$  (where $ \mathsf{W}_{Q}$ is any chain at $Q$)  of permutations of the set $\widehat{Q}$ forms a group. Since each of these permutations must be even and since the chains of the form $\mathsf{W}_{Q}=\{\mathsf{L}_{Q}=QSQ; F_{1},F_{2}\}$ for $S\in\{\mathcal{A}_{Q}\}$ and with  $F_{1}\neq{F_{2}}$ produce the generators of the alternating group $\mathtt{A}_{g}$ acting on the set $\{\mathcal{A}_{Q}\}\cong{\widehat{Q}}\cong{\{1,2,\ldots,g\}_{Q}}$,  we must have $\mathtt{G}_{Q}\cong{\mathtt{A}_{g}}$.
\end{proof}

\section{Conjugate Spin Groups}
Let $Q$ be a vertex of some standard spin graph $\mathcal{S}^{\epsilon}_{P}$ on a surface $\Sigma$ of genus  $g\geq{3}$. By $\mathfrak{W}_{Q}$ we will denote the set of all spin chains at $Q$.  Equivalently, $\mathfrak{W}_{Q}$ is the set of all sequences of Jacobi rotations $\Phi_{RS}$ ( here $RS$ is en edge of $\mathcal{S}^{\epsilon}_{P}$) from $\Phi_{Q}$ to $\Phi_{Q}$,  together with all possible choices of spin permutations for each Jacobi rotation (i.e. together with all possible choices of faces along the edges of a loop).

For any two vertices $Q$ and $R$ of the spin graph $\mathcal{S}^{\epsilon}_{P}$ to construct an isomorphism $f:\mathfrak{W}_{Q}\rightarrow{\mathfrak{W}_{R}}$  and then an isomorphism    $\varphi:\mathtt{G}_{Q}\rightarrow{\mathtt{G}_{R}}$   we may take any (oriented) path in $\mathcal{S}_{P}$ from $Q$ to $R$ together with a choice of faces along the edges of this path.  
 
It appears that for any $Q,R\in{\{\mathcal{S}_{P}\}}$ we may uniquely define a shortest path $C_{QR}$  between these vertices as well as there is a uniquely defined choice for faces along the edges of this path.  

First we notice, that for any two vertices $Q$ and $R$ we have  the following three possibilities:
\begin{itemize}
\item $Q$ and $R$ are connected by an edge $QR$. In this case we obviously  have: $C_{QR}=QR$. 
\item $Q$ and $R$ are not vertices of the same edge but $R\neq{\widetilde{Q}}$. Now $C_{QR}=QSR$ for some vertex $S$ which is connected  to both, to  $Q$ and to $R$.
\item the vertex $R={\widetilde{Q}}$. In this case we have $C_{QR}=QS_{1}S_{2}R$ for some vertex $S_{1}$ connected as well to $Q$ as to $S_{2}$, and with $S_{2}$  connected also to $R$. 
\end{itemize}
Since the standard graph is totally symmetric with respect to all of its vertices and since our fixed enumerations of the points of the integral divisor $\mathcal{A}^{\epsilon}_{P}$ induces (for each vertex $Q$) the unique enumeration  of the elements of $\{\mathcal{A}^{\epsilon}_{Q}\}$,   we obtain immediately that 
\begin{itemize}
\item When $QR$ is an edge of the graph $\mathcal{S}^{\epsilon}_{P}$ then we have $R=Q_{i}$ for some $i=1,2,..,g$ and $C_{QR}=QQ_{i}$.
\item When $Q$ and $R$ are not connected but $R\neq{\widetilde{Q}}$ then we must have $R={\widetilde{Q}_{j}}$ for some $j=1,2,..,g$. Now we may take $C_{QR}=QQ_{i}\widetilde{Q}_{j}$ with  any $i\neq{j}$. By the requirement that $i=\min\{1,2,..,\widehat{j},..g\}$ we obtain  the unique  shortest path from $Q$ to $R$ (here the hat over $j$ means that $j$ is omitted).
\item When $R={\widetilde{Q}}$ then any path $C_{QR}=QQ_{i}\widetilde{Q}_{j}\widetilde{Q}$, $i\neq{j}$,  in $\mathcal{S}^{\epsilon}_{P}$ is the shortest path that connects $Q$ and $R$.  We always can  require that $i=1$ and $j=2$ to obtain a uniduely defined shortest path.
\end{itemize}
Since the order of writing the vertices of $C_{QR}$ indicates the direction of traveling along this path, we will consider $C_{RQ}$ as the same path as $C_{QR}$ but travelled in the opposite direction, (i.e. from $R$ to $Q$).  Now, the one-to-one correspondence from the set of loops at $Q$ to the set of all loops at $R$ will be denoted by $\mathcal{A}{d}(C_{QR})$. It is given as
\begin{equation*}
\mathcal{A}{d}(C_{QR}): \mathsf{L}_{Q}\rightarrow{\mathsf{L}_{R}}={C_{QR}}{\mathsf{L}_{Q}}C_{RQ}
\end{equation*}
It only remains to fix faces along the edges of the path $C_{QR}$. Of course, any choice of faces will work.  However, our unique form of the path $C_{QR}$ for each possible  case of $R$, that is for $R=Q_{i}$ or for  $R=\widetilde{Q}_{j}$  or for $R=\widetilde{Q}$, allows us to consider a unique choice for faces along $C_{QR}$ in the following way:
\begin{enumerate}
\item When $R=Q_{i}$ then $C_{QR}=QQ_{i}$ and we will consider the face $\overline{F}=QQ_{i}\widetilde{Q}_{k}Q_{l}$ with $l=\min\{1,2,..,\widehat{i},..g\}$  and with $k=\min\{1,2,..\widehat{i},..,\widehat{l},..g\}$ when $i<l$ and similarly when $i>l$.
\item When $R=\widetilde{Q}_{j}$ then $C_{QR}=QQ_{i}\widetilde{Q}_{j}$ with $i$ determined  in the definition of the path $C_{QR}$.  Now, we take the face $\overline{F}_{1}$ along $QQ_{i}$ as $\overline{F}_{1}=\overline{F}$ given by $(1)$ above and a face $\overline{F}_{2}$ along $Q_{i}\widetilde{Q}_{j}$ given by vertices $\overline{F}_{2}=Q_{i}\widetilde{Q}_{j}\widetilde{Q}\widetilde{Q}_{k}$ with $k=\min\{1,2,..,\widehat{i},..,\widehat{j},..g\}$  and similarly when $j<i$.
\item When $R=\widetilde{Q}$, i.e. when $C_{QR}=QQ_{1}\widetilde{Q}_{2}\widetilde{Q}$, then along the edge $QQ_{1}$ we consider the face $\overline{F}_{1}=QQ_{1}\widetilde{Q}_{2}Q_{3}$, along the edge $Q_{1}\widetilde{Q}_{2}$ we will take the face $\overline{F}_{2}=Q_{1}\widetilde{Q}_{2}\widetilde{Q}\widetilde{Q}_{3}$ and along the edge $\widetilde{Q}_{2}\widetilde{Q}$ we consider the face $\overline{F}_{3}=\widetilde{Q}_{2}\widetilde{Q}\widetilde{Q}_{1}Q_{3}$
\end{enumerate} 
Now, when for example $R={\widetilde{Q}_{j}}$, the bijection $\widehat{C}_{QR}$ from the set $\mathfrak{W}_{Q}$ of all spin chains at $Q$ to the set $\mathfrak{W}_{R}$ of all spin chains at $R$  is  given by
\begin{equation*}
\widehat{C}_{QR}:\mathsf{W}_{Q}=\{\mathsf{L}_{Q};F_{1},\ldots,F_{N+1}\}\rightarrow{\{\mathcal{A}d(C_{QR})\mathsf{L}_{Q}; \overline{F}_{1},\overline{F}_{2},F_{1},\ldots,F_{N+1},\overline{F}_{2},\overline{F}_{1}\}}
\end{equation*}
  For the remaining cases of $C_{QR}$ i.e. when $R=Q_{i}$ or when $R=\widetilde{Q}$ we proceed analogously.

Let $\sigma^{C}_{QR}$ denote the permutation corresponding to the unique isomorphism $\widehat{Q}\rightarrow{\widehat{R}}$  determined by the edges of the path $C_{QR}$ and by our choices of faces along these edges.  More precisely we will consider:
\begin{equation*}
\sigma^{C}_{QR}=\sigma^{\overline{F}}_{QQ_{i}}\quad \text{when}\quad R=Q_{i}; \quad \sigma^{C}_{QR}=\sigma^{\overline{F}_{2}}_{Q_{i}\widetilde{Q}_{j}}\sigma^{\overline{F}_{1}}_{Q{Q}_{i}} \quad \text{when}\quad R=\widetilde{Q}_{j}
\end{equation*}  
and $\sigma^{C}_{QR}=\sigma^{\overline{F}_{3}}_{\widetilde{Q}_{2}\widetilde{Q}}\sigma^{\overline{F}_{2}}_{Q_{1}\widetilde{Q}_{2}}\sigma^{\overline{F}_{1}}_{QQ_{1}}$ when  $R=\widetilde{Q}$.

So, for each permutation $\sigma^{\mathsf{W}_{Q}}$ of the set $\widehat{Q}\cong{\{\mathcal{A}_{Q}\}}$ we obtain a well defined permutation of  the set $\widehat{R}\cong{\{\mathcal{A}_{R}\}}$ as :
\begin{equation}
\sigma^{\mathsf{W}_{Q}}\rightarrow{\sigma^{\mathsf{W}_{R}}}=\sigma^{C}_{QR}\sigma^{\mathsf{W}_{Q}}\sigma^{C}_{RQ}
\end{equation}
This means that for any two vertices $Q$ and $R$ of a standard spin graph $\mathcal{S}_{P}$ on $\Sigma$ the spin groups $\mathtt{G}_{Q}$ and $\mathtt{G}_{R}$ are conjugate to each other; $\mathtt{G}_{R}={\mathcal{A}d(\sigma^{C}_{QR})}{\mathtt{G}_{Q}}$. Of course, with any other choice of a path $C$ from $Q$ to $R$ as well as for any different choices of faces along the edges of the path $C$ we will also obtain a bijection between the sets $\mathfrak{W}_{Q}$ and $\mathfrak{W}_{R}$  together with a concrete conjugation of the appropriate groups. However these mappings will have different forms. 

We notice that, although all spin groups, at any standard point of $\Sigma$, are  isomorphic to each other and to the alternating group ${\mathtt{A}_{g}}\triangleleft{\mathtt{S}_{g}}$, we are able to find a concrete isomorphism $\mathcal{A}d(\sigma^{C}_{PQ}):{\mathtt{G}_{P}}\rightarrow{\mathtt{G}_{Q}}$ only when both $P$ and $Q$ are vertices of exactly the same spin graph on $\Sigma$.

When $P$ and $Q$ are vertices of distinct spin graphs then an enumeration of the points of $\{\mathcal{A}_{P}\}$ does not imply any enumeration of the set $\{\mathcal{A}_{Q}\}$.  Hence, to construct any isomorphism between the (naturally attached) groups $\mathtt{G}_{P}$ and $\mathtt{G}_{Q}$ we must first determine a one-one correspondence between the set $\{\mathcal{A}_{P}\}$ and the set $\{\mathcal{A}_{Q}\}$.

\section{Spin Groups at Weierstrass points}
On any hyperelliptic Riemann surface $\Sigma$, any nonsingular even spin bundl $\xi_{\epsilon}$  divides the set $\mathcal{W}$ of all Weierstrass points (of $\Sigma$) into two disjoint subsets. Each such subset has $g+1$ points that form the set of all vertices of an appropriate Weierstrass spin graph. More precisely, $\mathcal{W}={\{\mathcal{S}_{P}\}}\cup{\{\mathcal{S}_{R}\}}$ for some Weierstrass points $P$, $R$ such that $R\notin{\{\mathcal{A}_{P}\}}$. On the contrary to a standard spin graph $\mathcal{S}(g)$  (with $g>2$), where all faces are quadrangles, any Weierstrass spin graph $\mathcal{S}_{P}$ has all faces triangular (for any genus $g\geq{2}$). The examples of such graphs for $g=2$ and $g=3$ are given by the Pict6a and Pict6b respectively.

\begin{pspicture}(-4.5,-1.7)(4.5,4.5)
\psline[showpoints=true]%
(-4,0)(-2,0)(-3,1.73)
\psline(-4,0)(-3,1.73)
\rput(-3,-1){\rnode{A}{Pict.6a}}
\rput(-4.3,0){\rnode{a}{$P$}}
\rput(-1.7,0){\rnode{b}{$P_{1}$}}
\rput(-3.3,1.73){\rnode{c}{$P_{2}$}}

\psline[showpoints=true]%
(1,0)(4,0)(2,3.8)
\psline(1,0)(2,3.8)
\psline[showpoints=true, linestyle=dashed]%
(2,3.8)(2.5,1)
\psline[linestyle=dashed]%
(1,0)(2.5,1)
\psline[linestyle=dashed]%
(4,0)(2.5,1)

\rput(2.5,-1){\rnode{B}{Pict.6b}}
\rput(0.7,0){\rnode{d}{$P$}}
\rput(4.3,0){\rnode{e}{$P_{1}$}}
\rput(2.2,1.1){\rnode{f}{$P_{2}$}}
\rput(1.7,3.8){\rnode{g}{$P_{3}$}}
\end{pspicture}

When genus $g={3}$ then, similarly as in a standard case,  we may identify the set $\widehat{Q}\cong{\{\mathcal{A}_{Q}\}}$ with the set $\{1,2,3\}_{Q}$    for any vertex $Q\in{\{\mathcal{S}_{P}\}}\subset{\mathcal{W}}$.

     It is easy to see that  any face $F\subset{\mathcal{S}_{P}}$, $P\in{\mathcal{W}}$ with an edge $QR$ determines  a unique isomorphism $\sigma^{F}_{QR}:{\{1,2,3\}_{Q}}\rightarrow{\{1,2,3\}_{R}}$ in the way analogous to a standard case. Also, when $F_{1}$ is any other face of $\mathcal{S}_{P}$ with an edge $QR$ then the compositions $\sigma^{F}_{QR}\circ{\sigma^{F_{1}}_{RQ}}$ is an even permutation of the set $\widehat{R}\cong{\{\mathcal{A}_{R}\}}$ as well as the composition ${\sigma^{F_{1}}_{RQ}}\circ{\sigma^{F}_{QR}}$ is an even permutation of the set ${\widehat{Q}}$. 

For any Weierstrass point $Q\in{\mathcal{W}}$ we will define the spin group $\mathtt{G}_{Q}$ in exatly the same way as in the case of a standard point of $\Sigma$.  It is easy to see that  when the genus is $g=3$ and $Q\in{\mathcal{W}}$ then  any   spin group $\mathtt{G}_{Q}$ is isomorphic to the alternating group $\mathtt{A}_{3}\triangleleft{\mathtt{S}_{3}}$.

\begin{lemma}
Suppose that $\Sigma$ is a hyperelliptic Riemann surface with the genus $g\geq{3}$. For any Weierstrass point $P\in{\mathcal{W}}\subset{\Sigma}$   the corresponding spin group $\mathtt{G}_{P}$ is isomorphic to the alternating group $\mathtt{A}_{g}\triangleleft{\mathtt{S}_{g}}$.
\end{lemma}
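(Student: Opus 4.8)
The plan is to transport the argument of Lemmas 5--7 to the triangular setting, so that the proof splits into a parity computation for a single face, a telescoping argument for loops, and a generation argument for $\mathtt{A}_{g}$. Write the $g+1$ vertices as $\{P_{0}=P,P_{1},\ldots,P_{g}\}$. Since every vertex is a Weierstrass point, the divisor $\mathcal{A}_{P_{m}}$ is the product of the remaining vertices, so $\widehat{P_{m}}\cong\{P_{j}/P_{m}:j\neq m\}$, which I order by increasing index $j$ to identify it with $\{1,\ldots,g\}_{P_{m}}$. Every triple of vertices bounds a face, and a triangular face $\{P_{a},P_{b},P_{c}\}$ plays for its three vertices exactly the role that the genus-$2$ Weierstrass triangle of Pict.6a plays: the spectator quotients $P_{j}/P_{a}$ with $j\notin\{a,b,c\}$ are identified with $P_{j}/P_{b}$ and $P_{j}/P_{c}$, while the in-face correspondence is read off as in Section 3 by the rule that the edge behind a vertex maps to the edge behind the next, and the edge ahead to the edge ahead.

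The central step is the analogue of Lemma 5: for a fixed directed edge $P_{a}\to P_{b}$ the parity of $\sigma^{F}_{P_{a}P_{b}}$ is the same for every face $F$ through that edge, i.e. independent of the third vertex $P_{c}$. On the level of numerators the face map sends $P_{c}/P_{a}\to P_{a}/P_{b}$ and $P_{b}/P_{a}\to P_{c}/P_{b}$ and fixes every other numerator, so it is precisely the restriction to $\{0,\ldots,g\}\setminus\{a\}$ of the $3$-cycle $\tau=(a\,b\,c)$, viewed as a bijection onto $\{0,\ldots,g\}\setminus\{b\}$. I would first record this on the genus-$3$ cell, where all six edges can be checked by hand, and then give the general count as follows: let $\widehat{\tau}\in\mathtt{S}_{g+1}$ be the $3$-cycle induced by $\tau$ on the full ordered index set, which is even; passing to $\widehat{P_{a}}$ and $\widehat{P_{b}}$ deletes the domain position of $a$ and the codomain position of $b=\tau(a)$, and by the cofactor (minor) sign relation this multiplies the sign by $(-1)^{(a+1)+(b+1)}$. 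Hence $\mathrm{sgn}(\sigma^{F}_{P_{a}P_{b}})=(-1)^{a+b}=(-1)^{|a-b|}$, manifestly free of $c$. The main obstacle is exactly this bookkeeping: the domain and codomain are ranked by omitting different elements, so spectators lying between $a$ and $b$ are shifted, and the two rank-labellings must be organised cleanly; phrasing it through the minor of the $3$-cycle is what makes the $c$-independence transparent.

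With $\mathrm{sgn}(\sigma^{F}_{P_{a}P_{b}})=(-1)^{|a-b|}$ in hand, the analogue of Lemma 6 costs nothing and needs no case split. For a loop $\mathsf{L}_{Q}=P_{k_{0}}P_{k_{1}}\cdots P_{k_{N}}P_{k_{0}}$ the total sign of $\sigma^{\mathsf{W}_{Q}}$ is $(-1)^{\sum_{i}|k_{i}-k_{i+1}|}$, and since $|k_{i}-k_{i+1}|\equiv k_{i}+k_{i+1}\pmod 2$ the exponent collapses to $2\sum_{i}k_{i}\equiv 0$, each vertex being met twice along the closed loop. Thus every spin chain at $Q$, for any choice of faces along its edges, yields an even permutation of $\widehat{Q}$, giving $\mathtt{G}_{Q}\subseteq\mathtt{A}_{g}$.

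It remains to produce enough generators, as in Corollary 1 and Lemma 7. I would use the two-edge chain $\mathsf{W}_{Q}=\{\mathsf{L}_{Q}=QSQ;F_{1},F_{2}\}$ with $S=P_{b}$ and with $F_{1},F_{2}$ the two faces through the edge $QS$ whose third vertices $P_{c},P_{d}$ are distinct. Composing the numerator maps of $\sigma^{F_{1}}_{QS}$ and $\sigma^{F_{2}}_{SQ}$ shows that $\sigma^{\mathsf{W}_{Q}}$ is the $3$-cycle $(b\,c\,d)$ on $\widehat{Q}$. Because each edge lies in $g-1\ge 2$ faces once $g\ge 3$, letting $S$ and the two third vertices vary realises every $3$-cycle on the $g$-element set $\widehat{Q}$, and $3$-cycles generate $\mathtt{A}_{g}$. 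Therefore $\mathtt{G}_{Q}\cong\mathtt{A}_{g}\triangleleft\mathtt{S}_{g}$, with the genus-$3$ case recorded above serving as both the base case and a check on the sign formula.
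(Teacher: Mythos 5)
Your proposal is correct, and its skeleton (parity of a face permutation depends only on the directed edge; every closed spin chain is even; two-face chains along one edge give $3$-cycles, hence all of $\mathtt{A}_{g}$) is the same as the paper's program in Lemmas 5--7 and its Lemma 8. The execution, however, is genuinely different and in places stronger. The paper's own proof of this lemma is very short: it exhibits the Weierstrass $3$-cells $\{P,P_{i},P_{j},P_{g}\}$, reads off the generator $(i\,j\,g)$, and simply \emph{asserts} that all Weierstrass spin chains produce even permutations, leaning on the genus-$3$ discussion and on analogy with the standard case. You actually prove that assertion, and you do it with a cleaner device than the paper uses even in the standard setting: identifying $\sigma^{F}_{P_{a}P_{b}}$ as the minor of the $3$-cycle $(a\,b\,c)$ acting on $\{0,\dots,g\}$ and invoking the cofactor sign relation gives the closed form $\mathrm{sgn}(\sigma^{F}_{P_{a}P_{b}})=(-1)^{a+b}$ in one stroke, with $c$-independence visible by inspection, whereas the paper's Lemma 5 grinds through explicit cycle decompositions case by case. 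Likewise your loop argument ($|k_{i}-k_{i+1}|\equiv k_{i}+k_{i+1}$, so the exponent telescopes to $2\sum k_{i}$) replaces the paper's Case I/Case II analysis of Lemma 6 and needs no distinction according to whether particular vertices occur. Your generator computation ($\sigma^{F_{2}}_{SQ}\circ\sigma^{F_{1}}_{QS}=(b\,c\,d)$) recovers all $3$-cycles rather than only those of the form $(i\,j\,g)$, which is more than enough. In short: same architecture, but a more uniform sign bookkeeping that closes the gap the paper leaves open in the Weierstrass case.
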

\begin{proof}
Suppose that the set of all vertices of the graph $\mathcal{S}_{P}$ is ${\{\mathcal{S}_{P}\}}={\{P,P_{1},\ldots,P_{g}\}}$. Let us, for example, consider  the edge $PP_{g}\subset{\mathcal{S}_{P}}$. This edge is common for $g-1$ different triangular faces $F_{i}$ with vertices ${\{F_{i}\}}={\{P,P_{i},P_{g}\}}$ respectively; $i=1,2,\ldots,g-1$. We see that any two of these faces, say $F_{i}$ and $F_{j}$, $i\neq{j}$, determine a unique Weierstrass $3$-cell $\Gamma$ that is isomorphic to a Weierstrass spin-graph on a surface of genus $g=3$, (i.e.  to the spin graph given by Pict6b).  The set of vertices of this cell is ${\{\Gamma\}}={\{F_{i}\}\cap{\{F_{j}\}}}={\{P,P_{i},P_{j},P_{g}\}}$, $i,j\in{\{1,\ldots,g-1\}}$; $i\neq{j}$.   The subgroup of permutations of the set $\widehat{P}\cong{\{\mathcal{A}_{P}\}}\Leftrightarrow{\{1,2,\ldots,g\}_{P}}$  produced by this cell is generated by the permutation
\begin{equation}
\begin{pmatrix}1&2&{\ldots}&i&{\ldots}&j&{\ldots}&g\\1&2&{\ldots}&j&{\ldots}&g&{\ldots}&i\end{pmatrix}=(ijg)\in{\mathtt{S}_{g}(\widehat{P})}
\end{equation}
Hence, the set of all spin chains $\mathsf{W}_{P}$ at $P\in{\mathcal{W}}$ (which are defined in exactly the same way as for  standard spin graphs) produces even permutations of the set $\widehat{P}$ that form the spin group $\mathtt{G}_{P}$. From $(6.1)$ it is obvious that this group has to be   isomorphic to the alternating group, i.e. $\mathtt{G}_{P}\cong{\mathtt{A}_{g}}\triangleleft{\mathtt{S}_{g}}$.

Since the Weierstrass graph $\mathcal{S}_{P}$  is totally symetric with respect to all of its vertices, i.e. since the situation of any other vertex $Q\in{\{\mathcal{S}_{P}\}}\subset{\mathcal{W}}$ is exactly the same as one of the point $P$, we obtain immediately that for any $k=1,2,\ldots,g$ we have $\mathtt{G}_{P}\cong{\mathtt{G}_{P_{k}}}\cong{\mathtt{A}_{g}}\triangleleft{\mathtt{S}_{g}}$.
\end{proof}

\section{SUMMARY}

We have shown that a spin-graph  structure of  a standard or Weierstrass leaf of the $\epsilon$-foliation   yields to attaching the spin group $\mathtt{G}_{P}$ at each (standard or Weierstrass) point $P$ of a surface $\Sigma$.  More precisely, if $P$ and $Q$ belong to the same (standard or Weierstrass) leaf of the foliation then the Jacobi rotation $\Phi_{QP}$ from $\Phi_{P}:{\Sigma}\rightarrow{Jac{\Sigma}}$ to $\Phi_{Q}:{\Sigma}\rightarrow{Jac{\Sigma}}$ corresponds to passing from the set $\widehat{P}={{\Phi}_{P}(\Sigma)\cap{\Theta_{\epsilon}}}$ to the set ${\widehat{Q}}={{\Phi}_{Q}(\Sigma)\cap{\Theta_{\epsilon}}}$. Obviously, after a few Jacobi rotations, for example after a sequence ${\Phi}_{PR}{\Phi}_{RQ}{\Phi}_{QP}$, we will return to our original set $\widehat{P}$. Now, a spin-graph structure of a leaf found in $[2]$ allows us to extract  a group action of sequences of Jacobi rotations which start and end with the mapping $\Phi_{P}$. This group, denoted by $\mathtt{G}_{P}$, acts on the set  $\widehat{P}={\Phi_{P}(\{\mathcal{A}_{P}\})}\cong{\{\mathcal{A}_{P}\}}$ and it is isomorphic to the alternarting group ${\mathtt{A}_{g}}\triangleleft{\mathtt{S}_{g}}$. Moreover, for any two points $P$ and $Q$  of the same leaf of the $\epsilon$-foliation there is a natural (adjoint) isomorphism between the groups $\mathtt{G}_{P}$ and $\mathtt{G}_{Q}$. To obtain any isomorphism between the spin group that are attached at points $P$ and $Q$ belonging to  different standard (or Weierstrass) leaves we must first determine a one one correspondence between the sets $\{\mathcal{A}_{P}\}$ and $\{\mathcal{A}_{Q}\}$ appropriately.

A case of exceptional spin groups that are attached to the (at most $4g$) exceptional points of a surface $\Sigma$, (i.e. elements of exceptional leaves of the $\epsilon$-foliation) is given in ~\cite{KMB13}.

\end{document}